\documentclass[reqno]{amsart} 
\usepackage{tikz}
\usetikzlibrary{arrows.meta,calc}
\usepackage[a4paper, total={6.2in, 9.6in}, hmarginratio=1:1]{geometry}
\usepackage[utf8]{inputenc}
\usepackage{amsmath,amsfonts,amsthm,amssymb,amsopn,mathtools}
\usepackage{aliascnt} 
\usepackage[final, pdftex, pdfpagelabels, pdfstartview = {FitH}, bookmarks, colorlinks, plainpages = false, linktoc=all, linkcolor=red, citecolor=blue, urlcolor=blue,filecolor=black]{hyperref}
\usepackage[capitalize,nameinlink]{cleveref}
\usepackage{xcolor}
\usepackage[textwidth=20mm]{todonotes}
\usepackage{pdfrender, tikz-cd, rotating}
\usepackage{fancyhdr}
\usepackage{bm}
\usepackage{enumitem}
\usepackage{etoolbox, array, setspace}
\usepackage{url}
\usepackage{comment}
\usepackage{subcaption}

\numberwithin{equation}{section}
\mathtoolsset{showonlyrefs}

\theoremstyle{plain}
\newtheorem{theorem}{Theorem}[section]
\crefname{theorem}{Theorem}{Theorems}

\newtheorem{maintheorem}{Theorem}

\newtheorem{mainproposition}{Proposition}

\newaliascnt{lemma}{theorem}
\newtheorem{lemma}[lemma]{Lemma}
\aliascntresetthe{lemma}
\crefname{lemma}{Lemma}{Lemmas}

\newaliascnt{proposition}{theorem}
\newtheorem{proposition}[proposition]{Proposition}
\aliascntresetthe{proposition}
\crefname{proposition}{Proposition}{Propositions}

\newaliascnt{corollary}{theorem}
\newtheorem{corollary}[corollary]{Corollary}
\aliascntresetthe{corollary}
\crefname{corollary}{Corollary}{Corollaries}

\newaliascnt{conjecture}{theorem}

\aliascntresetthe{conjecture}
\crefname{conjecture}{Conjecture}{Conjectures}

\newaliascnt{claim}{theorem}

\aliascntresetthe{claim}
\crefname{claim}{Claim}{Claims}

\theoremstyle{definition}

\newaliascnt{definition}{theorem}
\newtheorem{definition}[definition]{Definition}
\aliascntresetthe{definition}
\crefname{definition}{Definition}{Definitions}

\newaliascnt{example}{theorem}
\newtheorem{example}[example]{Example}
\aliascntresetthe{example}
\crefname{example}{Example}{Examples}

\newaliascnt{remark}{theorem}
\newtheorem{remark}[remark]{Remark}
\aliascntresetthe{remark}
\crefname{remark}{Remark}{Remarks}

\newcommand{\pol}{\mathbb{K}[\mathbf{x}]}
\DeclareMathOperator{\Vol}{Vol}
\DeclareMathOperator{\conv}{conv}
\newcommand{\rec}[1]{\Delta (#1)}
\newcommand{\recD}[1]{\mathcal{D} (#1)}

\newcommand{\myauthor}[3]{%
  \author{#1}
  \address{#2}
  \email{#3}
}

\myauthor
  {Tristram Bogart}
  {Departamento de Matem\'aticas, Universidad de Los Andes, Colombia}
  {tc.bogart22@uniandes.edu.co}

\myauthor
  {Federico Castillo}
  {Facultad de Matem\'aticas, Pontificia Universidad Cat\'olica de Chile, Chile}
  {federico.castillo@uc.cl}

  \myauthor
  {Damian de la Fuente}
  {Instituto de Matem\'aticas, Universidad de Talca, Talca, Chile\\   LAMFA, Universit\'e de Picardie Jules Verne, Amiens, France}
  {damiandlfa@gmail.com}

\myauthor
  {David Plaza}
  {Instituto de Matem\'aticas, Universidad de Talca, Talca, Chile}
  {dplaza@utalca.cl}

\title{A differential characterization of volume polynomials of permutohedra}

\date{\today}

\keywords{Finite difference equations, Volume polynomials, Permutohedra,  Cartan matrices}

\subjclass[2020]{Primary 52B20, 20F55; Secondary 52A39}

\begin{document}

\begin{abstract}
We study the space $\rec{M}$ of polynomials $p(\mathbf{x}) \in \mathbb{K}[x_1, \dots, x_n]$ associated to a square $n\times n$ matrix $M$.
It consists of those $p$ for which, for each $i = 1, \dots, n$, the finite difference $p(\mathbf{x} - M_i) - p(\mathbf{x})$ is independent of the coordinate $x_i$, where $M_i$ denotes the $i$-th row of $M$.
We show that $\rec{M}$ coincides with the space defined using directional derivatives along the rows $M_i$, instead of the finite differences. 
We prove that $\rec{M}$ is graded and, moreover, is finite-dimensional if and only if every principal minor of $M$ is nonzero.
In the finite-dimensional case, $\rec{M}$ has dimension $\binom{n}{d}$ in degree $d$ for $0 \le d \le n$ and zero in higher degrees, giving total dimension $2^n$.
In the special case where $M$ is the Cartan matrix of an irreducible root system $\Phi$, we construct a basis of $\rec{M}$ consisting of volume polynomials of faces of $W(\Phi)$-permutohedra. 
This yields an elementary criterion for testing whether a polynomial admits an expression as a linear combination of these volume polynomials; we call this property geometricity.
\end{abstract}

\maketitle

\section{Introduction}

A classical result in convex geometry is that the volume of a Minkowski sum is a polynomial in the scaling parameters \cite[Chapter IV]{ewald1996combinatorial}. 
More precisely, let $K_1,\dots,K_n$ be convex bodies in $\mathbb{R}^n$, and let $\lambda_1,\dots,\lambda_n$ be nonnegative scalars. 
Then the function
\[
\Vol(\lambda_1 K_1 + \cdots + \lambda_n K_n)
\]
agrees with a homogeneous polynomial of degree $n$. 
We refer to such polynomials as \emph{volume polynomials}.
Volume polynomials are widely studied; for instance, they are important examples of Lorentzian polynomials \cite{branden2020lorentzian}.

Our guiding example is the classical permutohedron $\Pi(z)$, defined as the convex hull of all permutations of the entries of a vector $z \in \mathbb{R}^{n+1}$. 
The volume of $\Pi(z)$ is a polynomial function in the coordinates of $z$. 
One way to see this is to define $\lambda_i = z_{i} - z_{i+1}$ and observe that the translation of $\Pi(z)$ by $-z_{n+1}(1,1,\ldots,1)$ decomposes as a Minkowski sum of the form
\[
\Pi(z) -z_{n+1}\mathbf{1}_{n+1} = \lambda_1 \Pi(\mathbf{1}_1) + \cdots + \lambda_n \Pi(\mathbf{1}_n),
\]
where $\mathbf{1}_i$ denotes the vector whose first $i$ entries equal $1$ and the remaining entries equal $0$ \cite[Section 16]{postnikov2009permutohedra}.
Note that the $\lambda_i$'s are the lengths of the edges of the permutohedron.
Let $\mathbf{x}$ denote the variables $(x_1,\ldots,x_n)$.
To distinguish the function from the polynomial, we denote by $V(\mathbf{x}) \in \mathbb{R}[\mathbf{x}]$ the polynomial such that $V(\mathbf{x})\mid_{x_i=\lambda_i}=\Vol(\Pi(z))$.

The heart of the present paper is an elementary characterization of this volume polynomial. 
\begin{mainproposition}
\label{prop:heart}
The polynomial $V(\mathbf{x})\in\mathbb{R}[\mathbf{x}]$ is the \textbf{unique} (up to a scalar) homogeneous polynomial of degree $n$ such that, for each $i$, the difference $p(\mathbf{x}-\alpha_i) - p(\mathbf{x})$ is independent of the variable $x_i$. 
Here $\alpha_i$ denotes the $i$-th row of the Cartan matrix of type $A_n$, and for a vector $v$ we write $\mathbf{x} - v$ to mean $(x_1 - v_1, \dots, x_n - v_n)$. 
\end{mainproposition}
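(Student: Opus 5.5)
Two things must be shown: that $V$ satisfies the difference conditions, and that the degree-$n$ solution space is one-dimensional.

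\textbf{Step 1: $V$ satisfies the conditions.} I would use the geometry of the permutohedron and argue that translating the parameters by $-\alpha_i$ corresponds to a wall-crossing. Write $\lambda=\mathbf{x}$ in the fundamental-weight basis, so $\lambda=\sum_j x_j\omega_j$, and $\Pi(z)$ is the $W$-orbit polytope of $\lambda$. Since $\alpha_i=\sum_j a_{ij}\omega_j$, the shift $\mathbf{x}\mapsto\mathbf{x}-\alpha_i$ is $\lambda\mapsto\lambda-\alpha_i$. For $\mathbf{x}$ dominant the polynomial $V(\mathbf{x})$ is the volume of $\conv(W\lambda)$. A convenient polynomial identity is the "hypersimplex/slab decomposition": the volume of the orbit polytope, viewed as a polynomial, is invariant under the dot-type reflection $s_i$ acting linearly on $\mathbf{x}$, namely $x\mapsto s_i x$ with $x_i\mapsto -x_i$ and $x_j\mapsto x_j+ |a_{ji}| x_i$ for neighbours $j$. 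This holds because, for generic real $\lambda$, the signed volume of $\conv(W\lambda)$ (via Brion/Lawrence–Varchenko) depends only on the orbit $W\lambda$.

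I would rather argue more directly, as follows. Let $D_i=\partial_{\alpha_i}$ denote the directional derivative along the row $\alpha_i$. For a polynomial $p$ of degree at most $n$, the difference $p(\mathbf{x}-\alpha_i)-p(\mathbf{x})=\sum_{k\ge1}(-D_i)^kp/k!$. If $\partial_{x_i}D_ip=0$ for all $i$, then $\partial_{x_i}$ kills every $D_i^kp$, so the difference condition follows. Conversely, the difference condition implies the derivative condition by taking leading homogeneous parts and inducting downward. So for homogeneous $p$ both conditions reduce to $\partial_{x_i}D_ip=0$ for all $i$.

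For $V$, the quantity $D_iV$ is the variation of volume as $\lambda$ moves in direction $\alpha_i$. This equals a sum of facet volumes weighted by support changes, and the facets that move are exactly those normal to the $W$-orbit of $\omega_i$. Each such facet is, up to translation and $W$, a product of smaller permutohedra of the Levi of type $A_{i-1}\times A_{n-i}$. These depend only on $\lambda$ restricted to the roots orthogonal to $\omega_i$, hence not on $x_i$. The support shift $\langle\alpha_i,w\omega_i\rangle$ is constant, so $D_iV$ is independent of $x_i$.

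\textbf{Step 2: uniqueness.} Change coordinates so that $D_i$ becomes simple. Because the Cartan matrix $C$ is invertible, write $p(\mathbf{x})=q(\mathbf{y})$ with $\mathbf{x}=C^T\mathbf{y}$, so that $D_i=\partial_{y_i}$ (up to the transpose convention). The conditions become $\partial_{x_i}\partial_{y_i}q=0$, where $\partial_{x_i}=\sum_j (C^{-1})_{ij}\partial_{y_j}$. Equivalently, the quadrics $Q_i=\partial_{y_i}\sum_j(C^{-1})_{ij}\partial_{y_j}$ annihilate $q$. The $n$ quadrics $Q_i$ have diagonal coefficient $(C^{-1})_{ii}\neq0$. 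I would show they form a regular sequence whose apolar (inverse system) algebra is Gorenstein with Hilbert function $\binom nd$. The argument is to check that $\{Q_i=0\}$ has only the trivial solution. Given a common zero $\xi$ with some $\xi_i\ne0$, we get $(C^{-1}\xi)_i=0$ for every such $i$. Restricting to the support $S$ of $\xi$, the principal submatrix of $C^{-1}$ on $S$ would have to be singular. But principal minors of $C^{-1}$ equal complementary principal minors of $C$ divided by $\det C$, and these are nonzero. Hence we have a complete intersection of $n$ quadrics, its socle sits in degree $n$ with dimension $1$, and the degree-$n$ annihilated forms are one-dimensional.

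\textbf{Main obstacle.} I expect the main obstacle to be Step 1's geometric facet computation, in particular getting the sign and direction conventions right. As a fallback, Step 1 can be replaced by existence: the degree-$n$ solution space is exactly one-dimensional, so it suffices to check that $V$ lies in it. I would do this by verifying $\partial_{x_i}D_iV=0$ through Postnikov's explicit formula for $V$ as a sum of divided symmetrization terms.
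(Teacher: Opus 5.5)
Your proposal is correct and is essentially the paper's argument. Uniqueness uses the same duality between degree-$n$ solutions and $(S/I_C)_n$, where $I_C$ is the complete intersection of the quadrics symbolizing $\partial_{x_i}\mathcal{D}_i$ (Hilbert series $(1+t)^n$). Existence uses the identity $\mathcal{D}_iV=c\,V_{[n]\setminus\{i\}}$, which you get from the facet-variation formula where the paper cites Postnikov's Proposition~18.6. Your passage to $C^{-1}$ via Jacobi's complementary-minor identity is harmless but unnecessary: in the original coordinates the symbols are $y_i(Cy)_i$, and the positive principal minors of $C$ itself do the job.

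One slip to fix: for dominant $\lambda$, the support number of the facet with outer normal $w\varpi_i$ is $\langle\lambda,\varpi_i\rangle$ for every $w$. So it moves at rate $\langle\alpha_i,\varpi_i\rangle$, not $\langle\alpha_i,w\varpi_i\rangle$. The latter, summed over the orbit of congruent facets, would force $\mathcal{D}_iV=0$.
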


For example, in \cref{fig:difference} the difference between two permutohedra (hexagons in this case) is highlighted. 
In \Cref{Fig1} we start with a dominant vector $\lambda$ and consider the difference between the permutahedra $\Pi(\lambda) $ and $\Pi(\lambda - \alpha_1)$, while in \Cref{Fig2} we consider the difference between the permutahedra $\Pi(\lambda +h \varpi_1) $ and $\Pi(\lambda + h\varpi_1 - \alpha_1)$. 
We notice that the volume difference in both cases is the same. 
In other words, this difference does not depends on the first coordinate of $\lambda$.

\begin{figure}[ht]
    \centering
    \begin{subfigure}{0.45\textwidth}
        \centering
        \resizebox{\linewidth}{!}{\begin{tikzpicture}[scale=1, line cap=round, line join=round]

\coordinate (O) at (0,0);

\coordinate (w1) at (-0.45,0.78);
\coordinate (w2) at ( 0.45,0.78);

\coordinate (a1) at (-1.35,0.78);
\coordinate (a2) at ( 1.35,0.78);

\pgfmathsetmacro{\ax}{3}
\pgfmathsetmacro{\bx}{1.55}

\pgfmathsetmacro{\woneX}{-0.45}
\pgfmathsetmacro{\woneY}{0.78}
\pgfmathsetmacro{\wtwoX}{ 0.45}
\pgfmathsetmacro{\wtwoY}{0.78}

\pgfmathsetmacro{\lamX}{\ax*\woneX + \bx*\wtwoX}
\pgfmathsetmacro{\lamY}{\ax*\woneY + \bx*\wtwoY}


\coordinate (P0) at ({\ax*\woneX + \bx*\wtwoX},               {\ax*\woneY + \bx*\wtwoY});
\coordinate (P1) at ({(-\ax)*\woneX + (\ax+\bx)*\wtwoX},     {(-\ax)*\woneY + (\ax+\bx)*\wtwoY});
\coordinate (P2) at ({(-\ax-\bx)*\woneX + \ax*\wtwoX},       {(-\ax-\bx)*\woneY + \ax*\wtwoY});
\coordinate (P3) at ({(-\bx)*\woneX + (-\ax)*\wtwoX},        {(-\bx)*\woneY + (-\ax)*\wtwoY});
\coordinate (P4) at ({\bx*\woneX + (-\ax-\bx)*\wtwoX},       {\bx*\woneY + (-\ax-\bx)*\wtwoY});
\coordinate (P5) at ({(\ax+\bx)*\woneX + (-\bx)*\wtwoX},     {(\ax+\bx)*\woneY + (-\bx)*\wtwoY});

\pgfmathsetmacro{\h}{1}
\pgfmathsetmacro{\ap}{\ax - 2*\h}
\pgfmathsetmacro{\bp}{\bx + \h}

\coordinate (Q0) at ({\ap*\woneX + \bp*\wtwoX},               {\ap*\woneY + \bp*\wtwoY});
\coordinate (Q1) at ({(-\ap)*\woneX + (\ap+\bp)*\wtwoX},     {(-\ap)*\woneY + (\ap+\bp)*\wtwoY});
\coordinate (Q2) at ({(-\ap-\bp)*\woneX + \ap*\wtwoX},       {(-\ap-\bp)*\woneY + \ap*\wtwoY});
\coordinate (Q3) at ({(-\bp)*\woneX + (-\ap)*\wtwoX},        {(-\bp)*\woneY + (-\ap)*\wtwoY});
\coordinate (Q4) at ({\bp*\woneX + (-\ap-\bp)*\wtwoX},       {\bp*\woneY + (-\ap-\bp)*\wtwoY});
\coordinate (Q5) at ({(\ap+\bp)*\woneX + (-\bp)*\wtwoX},     {(\ap+\bp)*\woneY + (-\bp)*\wtwoY});

\fill[gray!18] (P0)--(P1)--(P2)--(P3)--(P4)--(P5)--cycle;
\fill[white]   (Q0)--(Q1)--(Q2)--(Q3)--(Q4)--(Q5)--cycle;

\fill[gray!30] (P0)--(P1)--(Q1)--(Q0)--cycle;

\draw[thick] (P0)--(P1)--(P2)--(P3)--(P4)--(P5)--cycle;
\draw[thick,dashed] (Q0)--(Q1)--(Q2)--(Q3)--(Q4)--(Q5)--cycle;

\foreach \P in {P0,P1,P2,P3,P4,P5}
  \filldraw[white,draw=black,line width=0.8pt] (\P) circle (0.05);

\foreach \Q in {Q0,Q1,Q2,Q3,Q4,Q5}
  \fill (\Q) circle (0.05);

\node[above=2pt] at (P0) {$\lambda$};
\node[above right=1pt] at (Q0) {$\lambda -\alpha_1$};

\def\L{4.2}
\draw[gray!60] (-\L,0) -- (\L,0);
\draw[gray!60] ({-0.5*\L},{0.8660254*\L}) -- ({0.5*\L},{-0.8660254*\L});
\draw[gray!60] ({-0.5*\L},{-0.8660254*\L}) -- ({0.5*\L},{0.8660254*\L});

\draw[blue, line width=1.2pt, -{Latex[length=3.5mm,width=2.5mm]}] (O) -- (a1);
\draw[red,  line width=1.2pt, -{Latex[length=3.5mm,width=2.5mm]}] (O) -- (a2);

\draw[black, line width=1.5pt, -{Latex[length=3.5mm,width=2.5mm]}] (O) -- (w1);
\draw[black, line width=1.5pt, -{Latex[length=3.5mm,width=2.5mm]}] (O) -- (w2);

\filldraw[white,draw=black,line width=1pt] (O)  circle (0.07);

\node[blue, above left=1pt]  at (a1) {$\alpha_1$};
\node[red,  above right=1pt] at (a2) {$\alpha_2$};

\node[above=1pt] at (w1) {$\varpi_1$};
\node[above=1pt] at (w2) {$\varpi_2$};

\end{tikzpicture}}
        \caption{$\Pi(\lambda) $ and $\Pi(\lambda - \alpha_1)$}
        \label{Fig1}
    \end{subfigure}
    \hfill
    \begin{subfigure}{0.45\textwidth}
        \centering
        \resizebox{\linewidth}{!}{\begin{tikzpicture}[scale=1, line cap=round, line join=round]

\coordinate (O) at (0,0);

\coordinate (w1) at (-0.45,0.78);
\coordinate (w2) at ( 0.45,0.78);

\coordinate (a1) at (-1.35,0.78);
\coordinate (a2) at ( 1.35,0.78);

\pgfmathsetmacro{\ax}{4}
\pgfmathsetmacro{\bx}{1.55}

\pgfmathsetmacro{\woneX}{-0.45}
\pgfmathsetmacro{\woneY}{0.78}
\pgfmathsetmacro{\wtwoX}{ 0.45}
\pgfmathsetmacro{\wtwoY}{0.78}

\pgfmathsetmacro{\lamX}{\ax*\woneX + \bx*\wtwoX}
\pgfmathsetmacro{\lamY}{\ax*\woneY + \bx*\wtwoY}


\coordinate (P0) at ({\ax*\woneX + \bx*\wtwoX},               {\ax*\woneY + \bx*\wtwoY});
\coordinate (P1) at ({(-\ax)*\woneX + (\ax+\bx)*\wtwoX},     {(-\ax)*\woneY + (\ax+\bx)*\wtwoY});
\coordinate (P2) at ({(-\ax-\bx)*\woneX + \ax*\wtwoX},       {(-\ax-\bx)*\woneY + \ax*\wtwoY});
\coordinate (P3) at ({(-\bx)*\woneX + (-\ax)*\wtwoX},        {(-\bx)*\woneY + (-\ax)*\wtwoY});
\coordinate (P4) at ({\bx*\woneX + (-\ax-\bx)*\wtwoX},       {\bx*\woneY + (-\ax-\bx)*\wtwoY});
\coordinate (P5) at ({(\ax+\bx)*\woneX + (-\bx)*\wtwoX},     {(\ax+\bx)*\woneY + (-\bx)*\wtwoY});

\pgfmathsetmacro{\h}{1}
\pgfmathsetmacro{\ap}{\ax - 2*\h}
\pgfmathsetmacro{\bp}{\bx + \h}

\coordinate (Q0) at ({\ap*\woneX + \bp*\wtwoX},               {\ap*\woneY + \bp*\wtwoY});
\coordinate (Q1) at ({(-\ap)*\woneX + (\ap+\bp)*\wtwoX},     {(-\ap)*\woneY + (\ap+\bp)*\wtwoY});
\coordinate (Q2) at ({(-\ap-\bp)*\woneX + \ap*\wtwoX},       {(-\ap-\bp)*\woneY + \ap*\wtwoY});
\coordinate (Q3) at ({(-\bp)*\woneX + (-\ap)*\wtwoX},        {(-\bp)*\woneY + (-\ap)*\wtwoY});
\coordinate (Q4) at ({\bp*\woneX + (-\ap-\bp)*\wtwoX},       {\bp*\woneY + (-\ap-\bp)*\wtwoY});
\coordinate (Q5) at ({(\ap+\bp)*\woneX + (-\bp)*\wtwoX},     {(\ap+\bp)*\woneY + (-\bp)*\wtwoY});

\fill[gray!18] (P0)--(P1)--(P2)--(P3)--(P4)--(P5)--cycle;
\fill[white]   (Q0)--(Q1)--(Q2)--(Q3)--(Q4)--(Q5)--cycle;

\fill[gray!30] (P0)--(P1)--(Q1)--(Q0)--cycle;

\draw[thick] (P0)--(P1)--(P2)--(P3)--(P4)--(P5)--cycle;
\draw[thick,dashed] (Q0)--(Q1)--(Q2)--(Q3)--(Q4)--(Q5)--cycle;

\foreach \P in {P0,P1,P2,P3,P4,P5}
  \filldraw[white,draw=black,line width=0.8pt] (\P) circle (0.05);

\foreach \Q in {Q0,Q1,Q2,Q3,Q4,Q5}
  \fill (\Q) circle (0.05);

\node[above=2pt] at (P0) {$\lambda + h\varpi_1$};
\node[above right=1pt] at (Q0) {$\lambda +h\varpi_1 -\alpha_1 $};

\def\L{4.2}
\draw[gray!60] (-\L,0) -- (\L,0);
\draw[gray!60] ({-0.5*\L},{0.8660254*\L}) -- ({0.5*\L},{-0.8660254*\L});
\draw[gray!60] ({-0.5*\L},{-0.8660254*\L}) -- ({0.5*\L},{0.8660254*\L});

\draw[blue, line width=1.2pt, -{Latex[length=3.5mm,width=2.5mm]}] (O) -- (a1);
\draw[red,  line width=1.2pt, -{Latex[length=3.5mm,width=2.5mm]}] (O) -- (a2);

\draw[black, line width=1.5pt, -{Latex[length=3.5mm,width=2.5mm]}] (O) -- (w1);
\draw[black, line width=1.5pt, -{Latex[length=3.5mm,width=2.5mm]}] (O) -- (w2);

\filldraw[white,draw=black,line width=1pt] (O)  circle (0.07);

\node[blue, above left=1pt]  at (a1) {$\alpha_1$};
\node[red,  above right=1pt] at (a2) {$\alpha_2$};

\node[above=1pt] at (w1) {$\varpi_1$};
\node[above=1pt] at (w2) {$\varpi_2$};

\end{tikzpicture}}
        \caption{ $\Pi(\lambda + h\varpi_1) $ and $\Pi(\lambda + h\varpi_1 - \alpha_1)$}
        \label{Fig2}
    \end{subfigure}
    \caption{Difference  between two permutohedra }
    \label{fig:difference}
\end{figure}

\Cref{prop:heart} is valid for any Cartan matrix of a root system $\Phi$, replacing the volume polynomial $V(\mathbf{x})$ of the permutohedron with the volume polynomial $V^\Phi(\mathbf{x})$ of the appropriate $W(\Phi)$-permutohedron, see \cref{cor: V is determined by system}.
Our main theorem characterizes the linear space spanned by the volume polynomials of all the faces of the $W(\Phi)$-permutohedron, including $V^\Phi(\mathbf{x})$.
While proving this, we observed that the key arguments do not rely on the specific structure of Cartan matrices, but only on certain properties of their principal minors.
This led us to develop a more general theory for arbitrary square matrices $M$, which we now describe.
Let $\mathbb{K}$ be a field of characteristic zero. 

\begin{definition}\label{def: rec M}
Let $M \in M_n(\mathbb{K})$ be a square matrix. 
For each $i\in[n]$, let $M_i$ denote the $i$-th row of $M$.
Let $\Delta_i:\pol \rightarrow \pol$ denote the \emph{finite difference} operator given by 
\begin{equation}
    \Delta_i (p(\mathbf{x})) = p(\mathbf{x}-M_i) - p(\mathbf{x}), 
\end{equation}
and let $\mathcal{D}_i = M_i \cdot \nabla$ denote the directional derivative in the direction $M_i$.

We define $\Delta (M) \leq \pol$ and $\mathcal{D}(M) \leq \pol$ as the subspaces
\begin{equation}
   \Delta (M) = \bigcap_{i=1}^{n}  \left\{ p \in \pol ~\middle|~ \frac{\partial}{\partial x_i}\Delta_i(p(\mathbf{x})) =0\right\}  \quad \mbox{and} \quad  \mathcal{D}(M) = \bigcap_{i=1}^{n} \left\{ p \in \pol ~\middle|~ \frac{\partial}{\partial x_i}\mathcal{D}_i(p(\mathbf{x})) =0\right\}.
\end{equation}
In other words, $\Delta(M)$ and $\mathcal{D}(M)$ are the spaces of polynomials for which the variable $x_i$ does not appear in $\Delta_i(p(\mathbf{x}))$ and $\mathcal{D}_i(p(\mathbf{x}))$, respectively, for every $i\in[n]$.
\end{definition}

Our first result shows that both spaces coincide. 

\begin{maintheorem} \label{main: equal}
    For any square matrix $M$, we have $\Delta(M)=\mathcal{D}(M)$. 
\end{maintheorem}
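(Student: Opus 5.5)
Write $\partial_i = \partial/\partial x_i$ and $\mathcal{D}_i = M_i\cdot\nabla$. By Taylor's formula (valid in characteristic zero) the finite difference operator is an exponential of the directional derivative:
\begin{equation}
\Delta_i = e^{-\mathcal{D}_i} - 1 = -\mathcal{D}_i\Bigl(1 - \tfrac{1}{2}\mathcal{D}_i + \tfrac{1}{6}\mathcal{D}_i^2 - \cdots\Bigr) = -\mathcal{D}_i\, U_i ,
\end{equation}
where $U_i = \sum_{k\ge 0} \frac{(-1)^k}{(k+1)!}\mathcal{D}_i^k$. On polynomials this series is a finite sum, since $\mathcal{D}_i$ lowers degree. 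All the operators $\partial_i$, $\mathcal{D}_i$, $U_i$ have constant coefficients, so they commute with one another.

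Moreover $U_i = 1 + N_i$ with $N_i$ locally nilpotent: it strictly lowers degree. Hence $U_i$ is invertible on $\pol$, with inverse given by the finite series
\begin{equation}
U_i^{-1} = \frac{-\mathcal{D}_i}{e^{-\mathcal{D}_i}-1} = \sum_{k\ge 0} \frac{(-1)^k B_k}{k!}\,\mathcal{D}_i^k
\end{equation}
(Bernoulli numbers, up to sign conventions). This inverse again commutes with $\partial_i$.

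With this in hand, the proof should be a one-line equivalence, argued separately for each $i$:
\begin{equation}
\partial_i \Delta_i p = -U_i\,(\partial_i \mathcal{D}_i p).
\end{equation}
Since $U_i$ is injective, $\partial_i\Delta_i p = 0$ if and only if $\partial_i\mathcal{D}_i p = 0$. Intersecting over $i\in[n]$ gives $\Delta(M) = \mathcal{D}(M)$.

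Only two points need care. The first is justifying the operator identity $\Delta_i = e^{-\mathcal{D}_i}-1$ on $\pol$, via the multivariate Taylor expansion $p(\mathbf{x}-v) = \sum_k \frac{(-v\cdot\nabla)^k}{k!}p(\mathbf{x})$. The second is the injectivity of $U_i$: if $U_i q = 0$, then the top-degree part of $q$ equals the top-degree part of $U_i q$, which is zero, so $q = 0$. No hypothesis on $M$ is needed, which matches the statement holding for any square matrix.

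I expect no real obstacle. The one subtlety is when $M_i = 0$: then both $\Delta_i$ and $\mathcal{D}_i$ vanish, so both conditions hold trivially, and the argument above covers this case anyway since then $U_i = 1$.
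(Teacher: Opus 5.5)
Your proof is correct, but it reaches the inclusion $\Delta(M)\subseteq\mathcal{D}(M)$ by a different mechanism than the paper.

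Both arguments start from the Taylor expansion $\Delta_i=\sum_{s\ge 1}\frac{(-1)^s}{s!}\mathcal{D}_i^s$ (\cref{lem: Delta in terms of D}). The easy inclusion $\mathcal{D}(M)\subseteq\Delta(M)$ is handled the same way in both.

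For the converse, the paper first proves that $\Delta(M)$ is graded (\cref{prop:graded}). It does this by a descending induction on the homogeneous components $p_m$. Reading off the degree $m-1$ part of the expansion, it writes $\mathcal{D}_i p_m$ as $-(\Delta_i p)_{m-1}$ plus terms $\mathcal{D}_i^s p_{m-1+s}$ with $s\ge 2$, which are free of $x_i$ by induction. Only then does it conclude, using $\mathcal{D}_i p=-(\Delta_i p)_{d-1}$ for homogeneous $p$ of degree $d$ (\cref{cor:one_step}).

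Your factorization $\Delta_i=-\mathcal{D}_iU_i$, with $U_i$ unipotent and commuting with $\partial_i$, packages that induction into one observation. The paper's top-down recursion is in effect solving the triangular system $U_i(\partial_i\mathcal{D}_i p)=-\partial_i\Delta_i p$ degree by degree, i.e.\ proving injectivity of $U_i$ by hand.

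Your route gains two things. Both inclusions follow from the single identity $\partial_i\Delta_i p=-U_i(\partial_i\mathcal{D}_i p)$, with no induction. And gradedness of $\Delta(M)$ becomes a consequence rather than a prerequisite, since $\mathcal{D}(M)$ is visibly graded (each $\partial_i\mathcal{D}_i$ is homogeneous of degree $-2$). The paper's route is more computational, but it yields \cref{prop:graded} explicitly as a standalone statement.

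The Bernoulli-number formula for $U_i^{-1}$ is harmless but unnecessary. The top-degree injectivity argument you give already suffices.
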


The main result in this paper is a characterization of this space in the case of Cartan matrices.

\begin{maintheorem}
\label{main:A}
Let $C$ be the Cartan matrix of an irreducible root system $\Phi$.
There is a basis for $\mathcal{D}(C)$ consisting of volume polynomials of faces of $W(\Phi)$-permutohedra.
\end{maintheorem}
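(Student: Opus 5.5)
Our plan is to combine a dimension count with an explicit family of $2^n$ polynomials. From the abstract (stated in the introduction), when every principal minor of $M$ is nonzero, $\mathcal{D}(M)$ is graded with $\dim \mathcal{D}(M)_d=\binom{n}{d}$ for $0\le d\le n$ and zero otherwise. Principal submatrices of a Cartan matrix of a root system are again Cartan matrices (of the subsystems spanned by subsets of simple roots) and so are positive definite up to symmetrization. Hence all principal minors of $C$ are positive and $\dim \mathcal{D}(C)=2^n$, with $\binom{n}{d}$ in degree $d$. It therefore suffices to produce, for each subset $J\subseteq[n]$, a homogeneous polynomial $V_J$ of degree $|J|$ lying in $\mathcal{D}(C)$ (equivalently $\Delta(C)$, by \cref{main: equal}), and to show that these polynomials are linearly independent.

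For the candidates we take a face $F_J$ of the $W(\Phi)$-permutohedron $\Pi(\lambda)$, with $\lambda=\sum x_i\varpi_i$: namely the face containing the vertex $\lambda$ and parallel to the span of $\{\alpha_j: j\in J\}$. This face is a $W_J$-permutohedron $\Pi_{W_J}(\lambda)$, and its $|J|$-dimensional volume $V_J(\mathbf{x})$ is a homogeneous polynomial of degree $|J|$. It depends only on the coordinates of $\lambda$ projected to the span of $\alpha_J$, i.e.\ on $x_j$ for $j\in J$. For $J=\emptyset$ we take the constant $1$, and $V_{[n]}=V^\Phi$.

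Linear independence should follow because $V_J$ involves only the variables in $J$. More precisely, its restriction is, up to a positive constant, the volume polynomial of the $\Phi_J$-permutohedron, which has nonzero top-degree coefficient of $\prod_{j\in J}x_j$ (it is a mixed volume of nonnegative dimension-filling zonotopal summands). The monomial $\prod_{j\in J}x_j$ appears in $V_J$ but in no $V_{J'}$ with $J'\ne J$ of the same degree. This gives independence in each degree.

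The main obstacle is membership, $\partial_{x_i}\Delta_i V_J=0$, i.e.\ the geometric fact illustrated in \cref{fig:difference}. If $i\notin J$, then $V_J$ does not involve $x_i$ and the only question is how the shift by $\alpha_i$ affects the $x_j$ with $j\in J$. Shifting by $-\alpha_i$ changes $x_j$ by $-C_{ij}$, a constant, so $\Delta_iV_J$ is still free of $x_i$. If $i\in J$, we reduce to $J=[n]$, by working inside $\Phi_J$ with its Cartan matrix, which is the principal submatrix. There we argue that $\Pi(\lambda)\setminus\Pi(\lambda-\alpha_i)$ decomposes as a union, over the $W$-orbit, of prisms over facets of type $[n]\setminus\{i\}$ of the shape shown in gray in the figure. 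Its volume is then a sum of terms, each being the volume of a facet $\Pi_{W_{[n]\setminus i}}$, which is independent of $x_i$, times a height equal to the fixed $\alpha_i$-displacement. I would first try to establish this via the derivative version, using \cref{main: equal}: we have $\mathcal{D}_i V=\tfrac{d}{dt}\Vol\Pi(\lambda+t\alpha_i)$, which by the standard facet formula equals $\sum_F \langle \alpha_i,u_F\rangle h'\cdot\Vol(F)$ over facets $F$. Facets orthogonal to $\varpi_i$-directions in the orbit contribute through volumes of $\Pi_{W_{[n]\setminus i}}$-type faces, but those in the orbits of the other $\varpi_j$ also appear. I expect the real difficulty is showing that these other terms have vanishing $x_i$-derivative, perhaps because $\langle\alpha_i,w\varpi_j\rangle$ sums to zero across the relevant orbit pairs via the reflection $s_i$.
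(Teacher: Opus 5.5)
Your outer framework matches the paper's: the dimension count from Theorem~\ref{main:B}, positive principal minors by symmetrization, the face volumes $V_J$, independence via the monomials $\prod_{j\in J}x_j$, the easy case $i\notin J$, and reduction to the principal subsystem. But the step that carries all the content, $V^\Phi\in\mathcal{D}(C)$, is left open, and the formula you propose for it is wrong. For $\mu=\sum_k x_k\varpi_k$ with all $x_k>0$, the facet of $\Pi_W(\mu)$ with outer normal $w\varpi_j$ is $w\,\Pi_{W_{[n]\setminus\{j\}}}(\mu)$. Its support number is $\langle\mu,\varpi_j\rangle/|\varpi_j|$, which is the \emph{same} for every $w$ and linear in $\mu$. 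So along $\mu=\lambda+t\alpha_i$ it changes at rate $\langle\alpha_i,\varpi_j\rangle/|\varpi_j|$, not $\langle\alpha_i,w\varpi_j\rangle/|\varpi_j|$. The coefficient $\langle\alpha_i,u_F\rangle$ you write is the rate for \emph{translating} the polytope by $t\alpha_i$. Summed over any orbit of facets it vanishes, since $\sum_{w\in W/W_{[n]\setminus\{j\}}}w\varpi_j$ is $W$-invariant and hence zero (translation invariance of volume). This happens for $j=i$ as well, so your formula would give $\mathcal{D}_iV^\Phi=0$, and the hoped-for cancellation ``via $s_i$'' proves too much. The missing fact is just $\langle\alpha_i,\varpi_j\rangle=\tfrac{\langle\alpha_i,\alpha_i\rangle}{2}\delta_{ij}$. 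Hence the facet families of type $[n]\setminus\{j\}$ with $j\neq i$ do not move at all, and only the $[W:W_{[n]\setminus\{i\}}]$ facets congruent to $\Pi_{W_{[n]\setminus\{i\}}}(\lambda)$ contribute. So $\mathcal{D}_iV^\Phi$ is a constant multiple of $V^\Phi_{[n]\setminus\{i\}}$, which is free of $x_i$. That is precisely the paper's proof. It takes Postnikov's formula for $\partial_{x_k}V^\Phi$ (this same facet formula) and expands $\alpha_i$ in the fundamental weights; the coefficient of $V^\Phi_{[n]\setminus\{j\}}$ then collapses to a multiple of $\langle\alpha_i,\varpi_j\rangle$.

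The geometric route you sketch first has the same hole, plus an error. Splitting $\Pi_W(\lambda)\setminus\Pi_W(\lambda-\alpha_i)$ into pieces attached only to the facets of type $[n]\setminus\{i\}$ again rests on the other facets staying put, i.e.\ on $\langle\alpha_i,\varpi_j\rangle=0$ for $j\neq i$. And the pieces are not prisms. In type $A_2$ with $i=1$, a piece is the trapezoid with vertices $\lambda$, $s_2\lambda$, $s_2(\lambda-\alpha_1)$, $\lambda-\alpha_1$, whose parallel sides have lengths proportional to $x_2$ and $x_2+1$. In general, $\lambda-\alpha_i$ has coordinates $x_j-C_{ij}\ge x_j$ for $j\neq i$. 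So ``facet volume times height'' is false. The correct shape is the one in the paper's remark, $\Pi_{W_{[n]\setminus\{i\}}}(\lambda)+\Gamma$ with $\Gamma=\conv(\{\mathbf{0}\}\cup\{-w\alpha_i\mid w\in W_{[n]\setminus\{i\}}\})$, whose volume is still free of $x_i$. Two smaller points. First, the summands $\Pi_W(\varpi_j)$ are not zonotopes in general (e.g.\ a simplex for $\varpi_1$ in type $A_n$). Positivity of the squarefree coefficient follows instead because each $\Pi_{W_J}(\varpi_j)$ contains the segment $[\varpi_j,\varpi_j-\alpha_j]$. Second, for reducible $\Phi_J$ your reduction needs $C_{ij}=0$ across components, so that $\mathcal{D}_i$ only acts on the factor containing $i$, as in the paper's corollary.
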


In particular, when $\Phi$ is of type $A_n$, this gives a basis for $\mathcal{D}(C)$ consisting of volumes of faces of the classical permutohedron $\Pi(z)$.
We call a polynomial that is a linear combination of volume polynomials of faces of the permutohedron a \emph{geometric polynomial}. 
In \cite{castillo2023size}, certain polynomials that count the sizes of lower intervals in Bruhat order of affine Weyl groups are shown to be geometric. 
In \cite{castillo2025paper}, an analogous statement is conjectured for general lower intervals and is proved for lower intervals of dominant elements of type $\tilde{A}_3$.
Our main motivation for \cref{main:A} was to establish an elementary tool to prove geometricity. 

The proof of \cref{main:A} consists of two parts.
First we prove a general structure theorem for $\mathcal{D}(M)$.
We say that a subspace $L\subset\mathbb{K}[\mathbf{x}]$ is \emph{graded} if $p(\mathbf{x}) \in L$ implies $p_d(\mathbf{x}) \in L$ for all $d$, where $p_d$ is the homogeneous component of $p$ of degree $d$, and we denote by $L_d$ its graded pieces.

\begin{maintheorem}
\label{main:B}
Let $M\in M_n(\mathbb{K})$ with all principal minors nonzero. 
Then $\mathcal{D}(M)$ is a graded subspace with 
\begin{equation}
   \dim_{\mathbb{K}}  \mathcal{D}(M)_d = \begin{cases}
       \binom{n}{d}, & \mbox{if } 0\leq d \leq n;\\[5pt]
       0, & \mbox{otherwise.}
   \end{cases}
\end{equation}
In particular, $\dim_{\mathbb{K}} \mathcal{D}(M) = 2^n$.
\end{maintheorem}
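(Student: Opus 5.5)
The plan is to turn the defining equations of $\mathcal{D}(M)$ into a problem about an ideal of quadrics, via Macaulay inverse systems. For $p\in\pol$ the condition $\frac{\partial}{\partial x_i}\mathcal{D}_i(p)=0$ reads $Q_i(\partial)p=0$, where
\begin{equation}
Q_i(\mathbf{y}) = y_i\Bigl(\sum_{j=1}^n M_{ij}y_j\Bigr)\in\mathbb{K}[y_1,\dots,y_n]
\end{equation}
and $Q_i(\partial)$ means substituting $y_j\mapsto \partial/\partial x_j$.

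Gradedness is immediate. Each $Q_i(\partial)$ is a constant-coefficient operator that is homogeneous of degree $-2$. So if $p=\sum_d p_d$ is a solution, the degree-$(d-2)$ part of $Q_i(\partial)p$ is $Q_i(\partial)p_d$, and it must vanish separately for each $d$.

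For the dimension count, use the apolarity pairing $\langle f,p\rangle = (f(\partial)p)(0)$ between $\mathbb{K}[\mathbf{y}]_d$ and $\pol_d$. Since $\mathbb{K}$ has characteristic zero, this pairing is perfect: monomials are orthogonal and $\langle \mathbf{y}^a,\mathbf{x}^a\rangle=a!\neq 0$. Let $I=(Q_1,\dots,Q_n)$. A homogeneous $p$ of degree $d$ lies in $\mathcal{D}(M)$ iff $Q_i(\partial)p=0$ for all $i$. This holds iff $\langle gQ_i,p\rangle=0$ for all $i$ and all $g$ of degree $d-2$, because a polynomial of degree $d-2$ is zero iff it pairs to zero against all $g$. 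Hence $\mathcal{D}(M)_d=(I_d)^{\perp}$, and
\begin{equation}
\dim_{\mathbb{K}}\mathcal{D}(M)_d=\dim_{\mathbb{K}}(\mathbb{K}[\mathbf{y}]/I)_d .
\end{equation}
It therefore suffices to show that the Hilbert series of $\mathbb{K}[\mathbf{y}]/I$ is $(1+t)^n$, which gives exactly the dimensions $\binom{n}{d}$ for $0\le d\le n$ and $0$ above. That Hilbert series holds precisely when $Q_1,\dots,Q_n$ is a regular sequence of quadrics.

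To prove regularity, first pass to the algebraic closure $\overline{\mathbb{K}}$. This is harmless: Hilbert functions are ranks of linear maps and do not change under field extension. Next, show that the common zero locus of the $Q_i$ in $\overline{\mathbb{K}}^n$ is only the origin. Let $\mathbf{y}$ be a common zero and $S=\{i: y_i\neq 0\}$. For each $i\in S$ we get $\sum_j M_{ij}y_j=0$, and since $y_j=0$ for $j\notin S$, this says $M_{S,S}\,\mathbf{y}_S=0$. The principal minor $\det M_{S,S}$ is nonzero by hypothesis, so $\mathbf{y}_S=0$, which forces $S=\emptyset$.

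The main obstacle is the final algebraic step, and I will handle it with standard commutative algebra. By the Nullstellensatz, $\sqrt{I}=(y_1,\dots,y_n)$, so $I$ has height $n$. The ring $\overline{\mathbb{K}}[\mathbf{y}]$ is Cohen–Macaulay, so $n$ homogeneous elements generating an ideal of height $n$ form a regular sequence. The Koszul resolution then gives Hilbert series $(1-t^2)^n/(1-t)^n=(1+t)^n$. Summing over $d$ yields $\dim\mathcal{D}(M)=2^n$, which completes the argument.
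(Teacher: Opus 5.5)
Your proposal is correct and follows essentially the same route as the paper: you identify $\mathcal{D}(M)$ with the inverse system of $I_M=(y_iL_i)$ via the apolarity pairing, show $V(I_M)=\{\mathbf{0}\}$ using the nonvanishing principal minors, and conclude via the Nullstellensatz, the Cohen--Macaulay property (a homogeneous system of parameters is a regular sequence) and the complete-intersection Hilbert series $(1-t^2)^n/(1-t)^n=(1+t)^n$. The differences are minor: you get gradedness of $\mathcal{D}(M)$ directly from the homogeneity of the operators $Q_i(\partial)$, whereas the paper routes it through $\Delta(M)=\mathcal{D}(M)$ and \Cref{prop:graded}; and you run the regularity argument over $\overline{\mathbb{K}}$ using field-extension invariance of Hilbert functions, whereas the paper uses $\overline{\mathbb{K}}$ only to obtain finite-dimensionality.
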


The condition on the principal minors is necessary: when some principal minor vanishes, $\mathcal{D}(M)$ is infinite-dimensional (see  \Cref{prop:rec_to_quotient} and \cref{ex: greater than 2^n}).
Note, however, that $\mathcal{D}(M)$ is always graded (see \cref{prop:graded}).

To prove \cref{main:B}, we exploit a duality between $\mathcal{D}(M)$ and a quotient of the polynomial ring by the ideal generated by the corresponding second-order differential operators in $\mathbb{K}[\frac{\partial}{\partial x_1},\dots,\frac{\partial}{\partial x_n}]$.
This reduces the computation of the graded dimension of $\mathcal{D}(M)$ to the computation of the Hilbert series of this quotient ring, which we carry out using standard arguments in commutative algebra.

To complete the proof of \cref{main:A}, we show that the volume polynomials of faces of permutohedra belong to $\mathcal{D}(C)$.
The same strategy applies when $M=I_n$ is the identity matrix: a basis for $\mathcal{D}(I_n)$ is given by square-free monomials, which arise as volume polynomials of faces of a rectangular box. 
However, there exist matrices $M$ for which the polynomial in the largest degree, which is unique up to scalar, does not have all coefficients of the same sign.
Consequently, the basis for $\mathcal{D}(M)$ cannot, in general, consist of volume polynomials.

A natural further direction is to characterize volume polynomials beyond the examples considered here. 
The Lorentzian property provides a necessary algebraic condition, but it is not sufficient. 
In fact, it was recently determined for which parameters (degree and number of variables) every volume polynomial is Lorentzian \cite{menges2026comparing}. 
Characterizing the space of all volume polynomials remains a major open problem. 
However, it may be feasible to obtain such characterizations for specific families of polytopes.

\subsection{Structure of the paper}
In \Cref{sec: rec} we study the spaces $\rec{M}$ and $\recD{M}$ and prove   \Cref{main: equal} and \Cref{main:B}.
In \Cref{sec: vols}, we define the volume polynomials $V_J^\Phi(\mathbf{x})$ and establish \Cref{main:A} and \Cref{prop:heart} for general Weyl groups.

\subsection{Acknowledgments} 
We thank Nicol\'as Libedinsky for his helpful comments. 
TB was partially supported by internal grant INV-2025-213-3438 from the Faculty of Science of the University of the Andes. 
FC was partially supported by FONDECYT-ANID grant 1260970.
DP was partially supported by FONDECYT-ANID grant 1240199.
We thank the support of the MATH-AmSud program (Grant 24-MATH-01).
This project began during the visit of Damian de la Fuente to the Instituto de Matemática of the University of Talca, supported by their short-term visits program.
The authors thank the institute for its hospitality.

\section{The spaces $\rec{M}$ and $\recD{M}$} \label{sec: rec}

Throughout this section, we fix a  positive integer $n$, a field $\mathbb{K}$ of characteristic zero, and a square matrix $ M =(M_{ij})\in M_n(\mathbb{K})$.

\subsection{Equality and graded structure of $\rec{M}$ and $\recD{M}$}

In this section, we show that the spaces $\rec{M}$ and $\recD{M}$ introduced in \Cref{def: rec M} coincide. 
Furthermore, we show that they are graded spaces. 
We begin by making  the definition of the differential operator $\mathcal{D}_i$ explicit. 
\begin{definition}
 For $i\in [n]$ we  define the differential operator
\begin{equation}\label{eq:differential_forms}
	\mathcal{D}_{i} \coloneqq \sum_{j=1}^{n} M_{ij} \frac{\partial}{\partial x_{j}}.
\end{equation}
\end{definition}

The following lemma  relates the finite difference operator $\Delta_i$ and the differential operator $\mathcal{D}_i$. 

\begin{lemma} \label{lem: Delta in terms of D}
For all $p(\mathbf{x})\in \mathbb{K}[\mathbf{x}]$ and $i\in [n]$ we have
	\begin{equation}\label{eq:finite_to_differentials}
		\Delta_i(p(\mathbf{x})) = \sum_{s=1}^{d}\frac{(-1)^{s}}{s!} \mathcal{D}_{i}^s p(\mathbf{x}),
	\end{equation}
	where $d$ is the degree of the polynomial $p(\mathbf{x})$. 
\end{lemma}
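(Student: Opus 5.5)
The plan is to apply Taylor's formula along the direction $-M_i$. Since $p$ is a polynomial of degree $d$, the expansion is exact: for any vector $v\in\mathbb{K}^n$,
\begin{equation}
p(\mathbf{x}+v) = \sum_{s=0}^{d} \frac{1}{s!}\,(v\cdot\nabla)^s p(\mathbf{x}).
\end{equation}
Because $\operatorname{char}\mathbb{K}=0$, the factorials are invertible. Taking $v=-M_i$ gives $v\cdot\nabla = -\mathcal{D}_i$, so $(v\cdot \nabla)^s = (-1)^s\mathcal{D}_i^s$. Subtracting the $s=0$ term, which is $p(\mathbf{x})$, yields exactly \eqref{eq:finite_to_differentials}.

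To justify the Taylor identity over an arbitrary field of characteristic zero, where we cannot appeal to analysis, I would argue purely algebraically. Introduce an auxiliary variable $t$ and consider $q(t)=p(\mathbf{x}+tv)\in\mathbb{K}[\mathbf{x}][t]$. By the chain rule for formal derivatives, $\frac{d}{dt}q(t) = \big((v\cdot\nabla)p\big)(\mathbf{x}+tv)$. Iterating gives $q^{(s)}(0)=(v\cdot\nabla)^s p(\mathbf{x})$. Since $q$ is a polynomial in $t$ of degree at most $d$, the one-variable formal Taylor formula $q(t)=\sum_{s\le d} q^{(s)}(0)t^s/s!$ holds; this can be checked on monomials $t^k$. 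Setting $t=1$ then gives the identity.

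There is an alternative that avoids the chain rule. Check the statement on monomials using the multinomial theorem: expand $(\mathbf{x}+v)^{\mathbf{a}}$ and compare with $(\sum_j v_j\partial_j)^s\mathbf{x}^{\mathbf{a}}$. Both sides are linear in $p$, so the identity extends to all polynomials. The operators $\partial_j$ commute, which is what makes the multinomial expansion of $(v\cdot\nabla)^s$ valid.

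The main point to handle carefully is the upper limit of summation. We have $\mathcal{D}_i^s p=0$ for $s>d$, since each application of $\mathcal{D}_i$ lowers degree by one. So truncating the sum at $d$ loses nothing, and the formula holds even if the true degree of $p$ is smaller than the stated $d$. Beyond this, the argument is routine.
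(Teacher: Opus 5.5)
Your proposal is correct, and your fallback argument is exactly the paper's proof: the paper reduces to a monomial $p=\mathbf{x}^\beta$, expands $p(\mathbf{x}-M_i)$ with the binomial theorem, and computes $\mathcal{D}_i^s\mathbf{x}^\beta=\sum_{|k|=s,\,k\preceq\beta} s!\prod_j M_{ij}^{k_j}\binom{\beta_j}{k_j}x_j^{\beta_j-k_j}$ by induction on $s$; it then matches the two expansions term by term after multiplying by $(-1)^s/s!$. Your primary route, through $q(t)=p(\mathbf{x}+tv)$ and the formal chain rule, packages the same exact Taylor expansion differently. It avoids the multi-index bookkeeping by reducing to the one-variable Taylor formula in $\mathbb{K}[\mathbf{x}][t]$, which is valid since $\mathbb{Q}\subseteq\mathbb{K}$, at the small cost of justifying the formal chain rule, itself a check on monomials.
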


\begin{proof}
    We recall that the operator $\Delta_i$ is defined as $\Delta_i(p(\mathbf{x}))=p(\mathbf{x}-M_{i}) - p(\mathbf{x})$, where $M_i $ is the $i$-th row of the matrix $M$.  
    Since both $\Delta_i$ and $\mathcal{D}_i$ are linear, without loss of generality, we can assume that the polynomial $p(\mathbf{x})$ is a monomial; say $p(\mathbf{x})=\mathbf{x}^{\beta}$ for some $\beta =(\beta_1, \ldots , \beta_n) \in \mathbb{N}^n$. 

Using the Binomial Theorem and reordering the terms we get

\begin{equation} \label{eq: Taylor A}
p(\mathbf{x} - M_i)
=
\sum_{s=0}^{|\beta|}
\ \sum_{\substack{ k\in \mathbb{N}^n \\  |k| = s \\ k\preceq \beta}}
\left(
\prod_{j=1}^n \binom{\beta_j}{k_j} (-M_{ij})^{k_j}x_j^{\beta_j-k_j}
\right)
\end{equation}
where $\preceq$ denotes the component-wise partial order on $\mathbb{N}^n$. 

On the other hand, we have 
\begin{equation}\label{eq: Taylor B}
 \mathcal{D}_i^s(p(\mathbf{x}))
=
\sum_{\substack{ k\in \mathbb{N}^n \\  |k| = s \\ k\preceq \beta}}
s!
\prod_{j=1}^n
M_{ij}^{k_j}\binom{\beta_j}{k_j}
\, x_j^{\beta_j - k_j},   
\end{equation}
which can be easily proved by induction on $s$.

Therefore, multiplying \eqref{eq: Taylor B} by $(-1)^s/s!$ and summing over all $0\leq s \leq |\beta|$ we have
\begin{equation}
 p(\mathbf{x}-M_i) = \sum_{s=0}^{|\beta|}\frac{(-1)^{s}}{s!} \mathcal{D}_{i}^s p(\mathbf{x}).
\end{equation}
Finally, by isolating the term corresponding to $s=0$ in the above sum we obtain the identity \eqref{eq:finite_to_differentials}.
\end{proof}

\begin{lemma}\label{lem:commute}
Let $p(\mathbf{x}) \in \mathbb{K}[\mathbf{x}]$, $i,j\in [n]$ and $s\geq 0$. 
	If $ \displaystyle\frac{\partial}{\partial x_{j}} p(\mathbf{x}) = 0 $  then $ \displaystyle\frac{\partial}{\partial x_{j}} \mathcal{D}^s_i p(\mathbf{x}) = 0 $.
\end{lemma}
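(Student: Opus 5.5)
The plan is to use the fact that constant-coefficient differential operators on $\pol$ commute with each other. The operator $\mathcal{D}_i = \sum_{k=1}^n M_{ik}\frac{\partial}{\partial x_k}$ is a $\mathbb{K}$-linear combination of the partial derivatives $\frac{\partial}{\partial x_k}$. Since mixed partial derivatives of polynomials commute, $\frac{\partial}{\partial x_j}\frac{\partial}{\partial x_k} = \frac{\partial}{\partial x_k}\frac{\partial}{\partial x_j}$ as operators on $\pol$. By linearity we therefore get $\frac{\partial}{\partial x_j}\mathcal{D}_i = \mathcal{D}_i\frac{\partial}{\partial x_j}$.

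Next, an induction on $s$ gives $\frac{\partial}{\partial x_j}\mathcal{D}_i^s = \mathcal{D}_i^s\frac{\partial}{\partial x_j}$ for all $s\geq 0$. The case $s=0$ is trivial. For the inductive step, write $\mathcal{D}_i^{s+1}=\mathcal{D}_i\mathcal{D}_i^s$ and move $\frac{\partial}{\partial x_j}$ past one factor at a time.

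Applying this identity to $p(\mathbf{x})$, we obtain
\begin{equation}
\frac{\partial}{\partial x_j}\mathcal{D}_i^s p(\mathbf{x}) = \mathcal{D}_i^s\Bigl(\frac{\partial}{\partial x_j}p(\mathbf{x})\Bigr) = \mathcal{D}_i^s(0)=0,
\end{equation}
using linearity of $\mathcal{D}_i^s$.

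There is no real obstacle here. The only point to justify is the commutation of partial derivatives on polynomials. This can be checked directly on monomials $\mathbf{x}^\beta$, where both orders of differentiation give $\beta_j\beta_k\,\mathbf{x}^{\beta-e_j-e_k}$ for $j\neq k$ (with the obvious adjustment when $j=k$, where the two orders coincide trivially). The general case then follows by linearity.
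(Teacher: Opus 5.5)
Your proof is correct and takes the same approach as the paper, which simply observes that $\mathcal{D}_i$ commutes with $\frac{\partial}{\partial x_j}$. You supply the routine details the paper leaves implicit: commutation of mixed partials checked on monomials, and induction on $s$.
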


\begin{proof}
	This follows from the fact that operators $ \mathcal{D}_i $ and the partial derivatives $\displaystyle \frac{\partial}{\partial x_{j}} $ commute. 
\end{proof}

We say that $ p $ is \textbf{free} of $ x_{j} $ when  $ \displaystyle \frac{\partial}{\partial x_{j}} p = 0 $. 

\begin{definition}
    Given a polynomial $p\in \pol$ and $m\in \mathbb{N}$ we denote by $p_m$ its $m$-th homogeneous component of degree $m$. 
    We say a subspace $U$ of $\pol$ is graded if $p\in U$ implies $p_m\in U$ for all $m\geq 0$. 
\end{definition}

\begin{proposition}\label{prop:graded}
The subspace $\rec{M}$ is graded.
\end{proposition}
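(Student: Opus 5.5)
We want to show that $p \in \rec{M}$ implies $p_m \in \rec{M}$ for every $m$. The plan is to translate the condition defining $\rec{M}$ into a condition on homogeneous components, using \cref{lem: Delta in terms of D}. Each $\mathcal{D}_i$ is a homogeneous operator of degree $-1$: it sends homogeneous polynomials of degree $m$ to homogeneous polynomials of degree $m-1$ (or zero). Likewise $\frac{\partial}{\partial x_i}$ lowers degree by one.

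Fix $i$ and write $p=\sum_{m=0}^{d} p_m$. By \cref{lem: Delta in terms of D},
\begin{equation}
\frac{\partial}{\partial x_i}\Delta_i(p) = \sum_{m=0}^{d}\sum_{s\ge 1}\frac{(-1)^s}{s!}\frac{\partial}{\partial x_i}\mathcal{D}_i^s p_m .
\end{equation}
The term indexed by $(m,s)$ is homogeneous of degree $m-s-1$. The hypothesis $p\in\rec{M}$ says this sum vanishes, so for each degree $k$ we obtain
\begin{equation}
\sum_{s\ge 1}\frac{(-1)^s}{s!}\frac{\partial}{\partial x_i}\mathcal{D}_i^{s} p_{k+s+1}=0 .
\end{equation}

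We want to deduce from these equations that $\frac{\partial}{\partial x_i}\mathcal{D}_i p_m=0$ for every $m$. This is the main step; I plan a descending induction on $m$.
\begin{itemize}
\item \emph{Base case.} Take $k=d-2$. Only $s=1$ contributes, so $\frac{\partial}{\partial x_i}\mathcal{D}_i p_d=0$.
\item \emph{Inductive step.} Suppose $\frac{\partial}{\partial x_i}\mathcal{D}_i p_{m'}=0$ for all $m'>m$. Then $\mathcal{D}_i p_{m'}$ is free of $x_i$. By \cref{lem:commute}, $\mathcal{D}_i^{s-1}\mathcal{D}_i p_{m'}=\mathcal{D}_i^{s}p_{m'}$ is also free of $x_i$ for all $s\ge1$. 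So in the degree-$(m-2)$ equation every term with $s\ge2$ vanishes, since those terms involve $p_{m+s-1}$ with $m+s-1>m$. What remains is $-\frac{\partial}{\partial x_i}\mathcal{D}_i p_m=0$, which completes the induction.
\end{itemize}

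This gives $\frac{\partial}{\partial x_i}\mathcal{D}_i p_m=0$ for all $i$ and $m$. It remains to see that each $p_m$ lies in $\rec{M}$. The same argument with \cref{lem:commute} shows $\frac{\partial}{\partial x_i}\mathcal{D}_i^s p_m=0$ for every $s\ge1$. Plugging this back into \cref{lem: Delta in terms of D} gives $\frac{\partial}{\partial x_i}\Delta_i(p_m)=0$, so $p_m\in\rec{M}$.

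As a byproduct, the argument already yields $\rec{M}\subseteq\recD{M}$. The only delicate point is keeping the degree bookkeeping straight in the descending induction. Everything else is routine.
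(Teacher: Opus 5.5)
Your proof is correct and follows essentially the same route as the paper: a descending induction on $m$ over the homogeneous components of the expansion from \cref{lem: Delta in terms of D}, using \cref{lem:commute} to kill the $s\ge 2$ terms, and then reassembling $\Delta_i(p_m)$ from the same expansion. The only difference is cosmetic. You apply $\frac{\partial}{\partial x_i}$ up front and work with vanishing equations in degree $m-2$, whereas the paper states that each component $(\Delta_i p)_{m-1}$ is free of $x_i$. The cases $m\le 1$, where your degree index would be negative, are trivial because $\frac{\partial}{\partial x_i}\mathcal{D}_i p_m$ then vanishes for degree reasons.
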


\begin{proof}
Suppose that $p \in \rec{M}$ and let $d=\operatorname{deg}p$.
Fix $1\leq i\leq n$.
The degree $j$ component of \eqref{eq:finite_to_differentials} is
\begin{equation}\label{eq:degreej}
\left(\Delta_{i} p\right)_j \;=\; \sum_{s=1}^{d-j} \frac{(-1)^{s}}{s!}\, \mathcal{D}_i^{s}\, p_{j+s}.
\end{equation}
Since $\Delta_{i} p$ is free of $ x_{i} $, the same is true for each of its components $(\Delta_{i} p)_j$.
We show by descending induction on~$m$ that $\mathcal{D}_i\, p_m$ is free of $ x_{i} $ for all $0\leq m  \leq d$.
The base case is $m=d$.
From \eqref{eq:degreej} with $j = d-1$, we see that $\mathcal{D}_i\, (p_d)=-(\Delta_{i} p)_{d-1}$ is free of $ x_{i} $.

Now we proceed with the inductive step.
Let $m<d$ and suppose that $ \mathcal{D}_{i}(p_{m '}) $ is free of $ x_{i} $ for all $ m < m' \leq d   $.
By \cref{lem:commute}, we have that $ \mathcal{D}_{i}^s(p_{m '}) $ is also free of $ x_{i} $, for $ s \geq 1 $ and $ m ' > m  $.
Letting $ j = m-1 $ in \eqref{eq:degreej}, we get
\begin{equation}
\mathcal{D}_{i}(p_{m}) \;=\; -(\Delta_{i} p)_{m-1} +\sum_{s=2}^{d-m+1} \frac{(-1)^{s}}{s!}\, \mathcal{D}_i^{s}\, (p_{m-1+s} ).
\end{equation}
The sum in the right hand side is free of $ x_{i} $ by induction hypothesis.
Since $(\Delta_{i} p)_{m-1}$ is also free of $ x_{i} $, we conclude that so is $ \mathcal{D}_{i}(p_{m}) $, completing the inductive step.

Therefore, $ \mathcal{D}_{i}(p_m) $ is free of $ x_{i} $ for all $ 0\leq m \leq d$.

By \cref{lem:commute}, $ \mathcal{D}_{i}^s(p_m) $ is free of $ x_{i} $ for any $ s\geq 1 $.
Thus \eqref{eq:finite_to_differentials} applied to $ p_m $ instead of $ p $ shows that $ \Delta_i(p_{m}) $ is free of $ x_{i} $.
Since this is true for every $ i $ we have that $ p_{m} \in \rec{M}$ as we wanted to show.
\end{proof}

\begin{corollary}\label{cor:one_step}
	We have 
	\begin{equation}\label{eq:one_step_diff}
		\rec{M}=\recD{M}.
	\end{equation}
\end{corollary}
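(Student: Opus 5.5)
Both inclusions follow from \cref{lem: Delta in terms of D}, \cref{lem:commute}, and the gradedness argument in the proof of \cref{prop:graded}.

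For $\recD{M}\subseteq\rec{M}$, I would take $p\in\recD{M}$, so that $\mathcal{D}_i p$ is free of $x_i$ for each $i$. Writing $\mathcal{D}_i^s p=\mathcal{D}_i^{s-1}(\mathcal{D}_i p)$ and applying \cref{lem:commute} to the $x_i$-free polynomial $\mathcal{D}_i p$, every $\mathcal{D}_i^s p$ with $s\ge 1$ is free of $x_i$. By \eqref{eq:finite_to_differentials}, $\Delta_i p$ is a linear combination of these, so it is free of $x_i$. This holds for every $i$, hence $p\in\rec{M}$.

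For $\rec{M}\subseteq\recD{M}$, I would take $p\in\rec{M}$ and fix $i$. The descending induction in the proof of \cref{prop:graded} already shows that $\mathcal{D}_i(p_m)$ is free of $x_i$ for every homogeneous component $p_m$, $0\le m\le\deg p$. That induction uses only \eqref{eq:degreej}, which expresses $\mathcal{D}_i(p_m)$ as $-(\Delta_i p)_{m-1}$ plus higher powers $\mathcal{D}_i^s$ of components of larger degree, together with \cref{lem:commute}. Summing over $m$ gives that $\mathcal{D}_i p=\sum_m \mathcal{D}_i(p_m)$ is free of $x_i$. Since $i$ was arbitrary, $p\in\recD{M}$.

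There is no real obstacle here. The only care needed is to cite the intermediate claim inside the proof of \cref{prop:graded}, namely that each $\mathcal{D}_i(p_m)$ is free of $x_i$, rather than only its conclusion $p_m\in\rec{M}$. If one wants to avoid relying on an internal step, an equivalent route is to reduce to homogeneous $p$ via \cref{prop:graded}. For homogeneous $p$ of degree $d$, the degree $d-1$ component of \eqref{eq:finite_to_differentials} is exactly $-\mathcal{D}_i p$, and a homogeneous component of an $x_i$-free polynomial is $x_i$-free. Since $\recD{M}$ is clearly graded (each $\mathcal{D}_i$ lowers degree by exactly one), this gives the inclusion.
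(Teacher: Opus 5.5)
Your proposal is correct and follows the paper's proof. The inclusion $\recD{M}\subseteq\rec{M}$ is obtained exactly as you do, from \cref{lem: Delta in terms of D} and \cref{lem:commute}. Your fallback route for the reverse inclusion is verbatim the paper's argument: reduce to homogeneous $p$ via \cref{prop:graded}, then read off $\mathcal{D}_i p=-(\Delta_i p)_{d-1}$. Your primary route, which cites the intermediate claim from the proof of \cref{prop:graded} that each $\mathcal{D}_i(p_m)$ is free of $x_i$ and then sums over $m$, is equally valid and simply skips the detour through $\Delta_i(p_m)$.
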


\begin{proof}
Let $p\in \recD{M}$. 
Then  $\mathcal{D}_{i} (p )$ is free of $x_i$ for all $i$.
Then, by \Cref{lem: Delta in terms of D} and   \Cref{lem:commute}, it follows that $ p \in \rec{M} $.
Conversely, suppose that $ p\in \rec{M} $.
If $p$ is homogeneous of degree $d$, note that $$\mathcal{D}_i\, (p)=-(\Delta_{i} p)_{d-1}$$ is free of $x_i$, for all $i\in[n]$.
That is, $p\in \recD{M}$.
If $p$ is not homogeneous, then its homogeneous components $p_m$ are also in $\recD{M}$ by \Cref{prop:graded} and the argument above.
Hence $p\in \recD{M}$, as desired.
\end{proof}

\subsection{Duality}

We consider the  $ \mathbb{K}$-vector spaces $ S = \mathbb{K}[\mathbf{y}] $ and $ T = \mathbb{K}[\mathbf{x}] $, with $\mathbf{y}=(y_1,\ldots,y_n)$ and $\mathbf{x}=(x_1,\ldots,x_n)$.
These spaces are naturally graded.
We let $S$ act on $T$ by 
\begin{equation}
 q\cdot p \coloneq   q \left( \frac{\partial}{\partial x_1}, \cdots, \frac{\partial}{\partial x_n} \right) p, 
\end{equation}
for $q\in S$ and $p\in T$. 
We then define the pairing
\begin{equation}
    \begin{array}{cccl}
       \langle - , - \rangle : &  S\times T & \longrightarrow  & \mathbb{K}\\
     & (q,p)   &   \longrightarrow & \langle q,p \rangle \coloneqq (q\cdot p)(\mathbf{0}). 
    \end{array}
\end{equation}

This pairing respects the grading in the sense that if $S_d$ and $T_e$ denote the homogeneous component of degree $d$ and $e$ of $S$ and $T$, respectively, then we have $ \langle  S_d ,T_e  \rangle =0 $ whenever $d\neq e$.
Furthermore, on each degree, the induced pairing $S_d\times T_d  \longrightarrow  \mathbb{K}$ is perfect, as can be checked by noting that the set of monomials give dual bases (up to scalar).

Let $I$ be a subspace of $S$.
We define
\begin{equation}
     I^{\perp}  \coloneqq \{  p\in T \mid \langle y,p \rangle =0, \, \forall y\in I   \} \quad \mbox{and } \quad
 \tilde{I}  \coloneq   \{ p\in T  \mid y\cdot p =0, \, \forall y \in I\} .  
\end{equation}

\begin{lemma}  \label{lem: graded linear}
    Let $I$ be a graded subspace of $S$. Then:
    \begin{enumerate}
        \item $I^{\perp}$ is a graded subspace of $T$.
        \item $I^{\perp}\simeq S/I$ as graded vector spaces.
        \item Moreover, if $I$ is an ideal of $S$ then $I^\perp = \tilde{I}$.
    \end{enumerate}
\end{lemma}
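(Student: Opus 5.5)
The plan is to reduce everything to linear algebra degree by degree, using the perfect pairings $S_d\times T_d\to\mathbb{K}$ noted just before the statement. Since $I$ is graded, $I=\bigoplus_d I_d$ with $I_d=I\cap S_d$.

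For (1), I will show that $I^\perp=\bigoplus_d I_d^{\perp_d}$, where $I_d^{\perp_d}=\{p\in T_d \mid \langle q,p\rangle=0 \ \forall q\in I_d\}$. Take $p\in T$ with homogeneous components $p_e$. For $q\in I_d$, orthogonality of different degrees gives $\langle q,p\rangle=\langle q,p_d\rangle$. Suppose $p\in I^\perp$. Then $\langle q,p_d\rangle=0$ for all $q\in I_d$. For any other $e$ and $q\in I_e$ we have $\langle q,p_d\rangle=0$ automatically when $e\neq d$. Since every element of $I$ is a sum of homogeneous elements of $I$, linearity gives $p_d\in I^\perp$. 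This proves that $I^\perp$ is graded, and the same computation gives $(I^\perp)_d=I_d^{\perp_d}$.

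For (2), fix $d$. The pairing on $S_d\times T_d$ is perfect and the spaces are finite-dimensional, so $T_d\cong S_d^*$ via $p\mapsto\langle -,p\rangle$. Under this isomorphism, $I_d^{\perp_d}$ corresponds to the annihilator of $I_d$ in $S_d^*$, which is canonically $(S_d/I_d)^*$. Hence $\dim (I^\perp)_d=\dim S_d-\dim I_d=\dim (S/I)_d$. Because $S/I=\bigoplus_d S_d/I_d$ is graded, choosing isomorphisms degree by degree gives $I^\perp\simeq S/I$ as graded vector spaces. The isomorphism is non-canonical, but this is all that is needed.

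For (3), let $I$ be an ideal. The inclusion $\tilde I\subseteq I^\perp$ is immediate: if $q\cdot p=0$ then its value at $\mathbf{0}$ is $0$. For the converse, let $p\in I^\perp$ and $q\in I$, and suppose for contradiction that $q\cdot p\neq 0$. Then $q\cdot p$ has some nonzero coefficient, say of $\mathbf{x}^\beta$. Since $I$ is an ideal, $\mathbf{y}^\beta q\in I$. Because the action is an algebra action (differential operators with constant coefficients compose multiplicatively), we get $\langle \mathbf{y}^\beta q,p\rangle=(\partial^\beta (q\cdot p))(\mathbf{0})=\beta!\cdot[\mathbf{x}^\beta](q\cdot p)\neq 0$, using characteristic zero. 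This contradicts $p\in I^\perp$. Note that part (3) does not need the grading.

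The main subtlety is in (1): one must use that $I$ is graded in order to split the orthogonality conditions degree by degree. The remaining steps are routine.
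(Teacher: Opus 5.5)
Your proof is correct and follows the paper's argument. You use degree-by-degree linear algebra with the perfect pairings $S_d\times T_d\to\mathbb{K}$ for (1) and (2), and the non-degeneracy of the pairing applied to $\langle y'q,p\rangle=\langle y',q\cdot p\rangle$ for (3). You simply make explicit what the paper leaves implicit: the splitting of the orthogonality conditions by degree in (1), and the concrete witness $y'=\mathbf{y}^\beta$ (using $\beta!\neq 0$ in characteristic zero) for non-degeneracy in (3).
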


\begin{proof}
The first claim follows immediately from the fact that the pairing respects the grading.

We now prove the second claim. The pairing induces a bilinear map
\begin{equation}
\overline{\langle - , - \rangle} : S/I  \times  I^{\perp} \longrightarrow \mathbb{K},
\end{equation}
given by $ \overline{\langle q+I  ,  p  \rangle} \coloneqq \langle q,p \rangle$, which is well defined by the definition of $I^{\perp}$.
 Since $I$ is graded, we have $ (I^{\perp})^{\perp} = I$.
This follows from the fact that the pairing respects the grading and that each homogeneous component is finite-dimensional. 
Consequently, the induced pairing
\begin{equation}
(S/I)_d  \times (I^{\perp})_d   \longrightarrow \mathbb{K}.
\end{equation}
is perfect for every $d \geq 0$, and it also respects the grading.
It follows that $ ( I^{\perp})_d \simeq ((S/I)_d)^\ast$.
Since each homogeneous component is finite-dimensional, we also have $ ((S/I)_d)^\ast \simeq (S/I)_d$.
Therefore, $I^{\perp} \simeq S/I$ as graded vector spaces.

We now prove the third claim. 
The inclusion $\tilde{I} \subset I^{\perp}$ is trivial. 
For the other inclusion we take $p\in I^{\perp}$ and $y\in I$. 
We must show that $y\cdot p =0$. 
Let $y'\in S$. 
We have
\begin{equation}
    \langle   y' , y\cdot p \rangle= (y'\cdot (y\cdot p))(\mathbf{0}) = ((y'y)\cdot p) (\mathbf{0}) =\langle   y'y,p  \rangle  =0, 
\end{equation}
where the last equality follows from $y'y\in I$.
Since the pairing is non-degenerate, we get $y\cdot p=0$ and thus $p\in \tilde{I}$.
Therefore, $I^{\perp} \subset \tilde{I}$ and the third claim follows. 
\end{proof}

We now consider the ideal $ I_M \coloneqq ( y_i L_i, i\in [n]) $ of $S$, where $ L_i = \sum_{j=1}^{n} M_{ij}y_j \in S $ are the linear forms obtained from the rows of our matrix $M$.
We remark that $I_M$ is an homogeneous ideal of $ S $, and also a graded $ \mathbb{K} $-linear subspace.

\begin{proposition} \label{prop: iso graded vector spaces}
The spaces $\rec{M}$ and $S/I_M$ are isomorphic as graded $\mathbb{K}$-vector spaces. 
\end{proposition}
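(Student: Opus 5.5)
By \Cref{cor:one_step} it suffices to show $\recD{M}\simeq S/I_M$ as graded vector spaces. We will obtain this from \Cref{lem: graded linear} applied to the homogeneous ideal $I=I_M$, by identifying $\recD{M}$ with $\tilde{I}_M$. Parts (2) and (3) of that lemma then give
\[
\recD{M}=\tilde{I}_M=I_M^{\perp}\simeq S/I_M
\]
as graded vector spaces. Gradedness of $\recD{M}$ also follows from this identification, consistent with \Cref{prop:graded}.

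The key step is a dictionary between the defining conditions and the action of $S$ on $T$. Under $q\cdot p = q(\partial_{x_1},\dots,\partial_{x_n})p$, the linear form $L_i=\sum_j M_{ij}y_j$ acts as $\mathcal{D}_i$, and $y_i$ acts as $\partial/\partial x_i$. The action is a ring action, since partial derivatives commute. Hence
\[
(y_iL_i)\cdot p=\frac{\partial}{\partial x_i}\mathcal{D}_i p .
\]
So $p\in\recD{M}$ if and only if each generator $y_iL_i$ of $I_M$ annihilates $p$.

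It remains to pass from the generators to the whole ideal, i.e.\ to show that $p$ is killed by all of $I_M$ exactly when it is killed by the generators. Any element of $I_M$ has the form $\sum_i q_i y_iL_i$ with $q_i\in S$. Its action on $p$ is $\sum_i q_i\cdot\big((y_iL_i)\cdot p\big)$, which vanishes whenever each $(y_iL_i)\cdot p=0$. The converse is trivial. This gives $\recD{M}=\tilde{I}_M$, and the chain of equalities above finishes the proof.

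I expect no real obstacle. The one point to check is that the $S$-action is a genuine module action, so that annihilation by generators propagates to the ideal; this holds because constant-coefficient differential operators commute.
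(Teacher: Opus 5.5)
Your proposal is correct and follows the paper's proof essentially verbatim. You reduce to $\recD{M}$ via \Cref{cor:one_step}, identify $\recD{M}=\widetilde{I_M}$ by observing that $y_iL_i$ acts as $\frac{\partial}{\partial x_i}\mathcal{D}_i$ and that annihilation by the generators extends to the whole ideal, and then apply \Cref{lem: graded linear}; your remark that the $S$-action is a genuine module action just spells out a step the paper states in one line.
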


\begin{proof}

By \cref{cor:one_step}  we have $\rec{M} = \recD{M}$. 
On the other hand, by definition, $\recD{M}$ consists of the polynomials $p \in T$ such that $y_iL_i\cdot p=0$ for all $i$.
Since these elements generate the ideal $I_M$, it follows that $y \cdot p = 0$ for all $y \in I_M$. 
In other words, $\recD{M} = \widetilde{I_M}$.
Therefore, we get $\rec{M}= \widetilde{I_M}$. 
With this equality at hand, the result follows by a direct application of \Cref{lem: graded linear}. 
\end{proof}

\subsection{Commutative algebra}
We now turn to some standard tools from commutative algebra, and we present the arguments in an elementary and self-contained manner.

Thanks to \Cref{prop: iso graded vector spaces} we can understand the graded vector space structure of $\Delta(M)$ through the ideal $I_M$.
We will do so by studying its zero locus in the affine space $\mathbb{A}^n_{\mathbb{K}}$.

\begin{lemma} \label{lem: minors to zero}
The variety $V(I_M)$ is equal to $\{ \mathbf{0}\}$ if and only if all principal minors of $M$ are nonzero. 
\end{lemma}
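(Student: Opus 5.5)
We argue directly with the equations defining $V(I_M)$. A point $\mathbf{a}\in\mathbb{A}^n_{\mathbb{K}}$ lies in $V(I_M)$ exactly when, for every $i\in[n]$, we have $a_i L_i(\mathbf{a})=0$. So for each $i$ either $a_i=0$ or $L_i(\mathbf{a})=\sum_j M_{ij}a_j=0$. The plan is to organize points by their support $J=\{i : a_i\neq 0\}\subseteq[n]$. We read points over $\mathbb{K}$ itself; the argument also works verbatim over the algebraic closure, since principal minors do not change under field extension.

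Suppose first that all principal minors of $M$ are nonzero, and let $\mathbf{a}\in V(I_M)$ with support $J$. For every $i\in J$ we have $a_i\neq 0$, hence $L_i(\mathbf{a})=0$. Since $a_j=0$ for $j\notin J$, this says $\sum_{j\in J}M_{ij}a_j=0$ for all $i\in J$. In other words, $M_{J,J}\,\mathbf{a}_J=0$, where $M_{J,J}$ is the principal submatrix indexed by $J$ and $\mathbf{a}_J$ is the restriction of $\mathbf{a}$ to $J$. If $J\neq\emptyset$, then $\det M_{J,J}\neq 0$ forces $\mathbf{a}_J=0$, which contradicts the definition of $J$. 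Hence $J=\emptyset$, i.e.\ $\mathbf{a}=\mathbf{0}$. Clearly $\mathbf{0}\in V(I_M)$, so $V(I_M)=\{\mathbf{0}\}$.

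Conversely, suppose some principal minor vanishes, say $\det M_{J,J}=0$ for a nonempty $J$. Choose a nonzero vector $\mathbf{b}\in\mathbb{K}^J$ with $M_{J,J}\mathbf{b}=0$, and extend it by zeros to $\mathbf{a}\in\mathbb{K}^n$. For $i\notin J$ we have $a_i=0$, so $a_iL_i(\mathbf{a})=0$. For $i\in J$ we have $L_i(\mathbf{a})=(M_{J,J}\mathbf{b})_i=0$, so again $a_iL_i(\mathbf{a})=0$. Thus $\mathbf{a}$ is a nonzero point of $V(I_M)$. Since $I_M$ is homogeneous, the whole line $\mathbb{K}\mathbf{a}$ in fact lies in $V(I_M)$.

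The argument is elementary. The only step needing care is the converse: the kernel vector must be taken over the ground field (or its closure), and a zero of $M_{J,J}\mathbf{b}$ with some $b_i=0$ for $i\in J$ still gives a valid point, because the equations hold regardless of which coordinates of $\mathbf{b}$ happen to vanish.
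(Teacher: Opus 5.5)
Your proof is correct and follows essentially the same route as the paper. In one direction you restrict the equations $a_iL_i(\mathbf{a})=0$ to the support $J$ of a point and use invertibility of the principal submatrix $M_{J,J}$; in the other you extend a nonzero kernel vector of a singular $M_{J,J}$ by zeros to obtain a nonzero point of $V(I_M)$. Your remark that the argument works verbatim over $\overline{\mathbb{K}}$ matches how the paper later applies the lemma.
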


\begin{proof}
For a subset $J \subseteq [n]$, we denote by $M_J$ the principal submatrix of $M$ indexed by $J$, that is, the submatrix obtained by restricting $M$ to the rows and columns indexed by $J$.

Assume that some principal minor of $M$ is zero. Then there exists a nonempty subset $J \subseteq [n]$ and a nonzero vector
$a=(a_j)_{j\in J}\in \mathbb{K}^J$
such that
$M_J\cdot a=\mathbf{0}$.
We define the vector $b=(b_1,\dots,b_n)\in \mathbb{K}^n$ by
\begin{equation}
b_j=
\begin{cases}
a_j,& \text{if } j\in J,\\
0,& \text{if } j\notin J.
\end{cases}
\end{equation}
We stress that $b\neq \mathbf{0}$ since $a\neq \mathbf{0}$ and $J\neq \emptyset$.
Since $M_J\cdot a=\mathbf{0}$ it is easy to check that $ b_i\sum_{j=1}^n M_{ij}b_j=0,$
for all $i\in [n]$. 
Hence, $\mathbf{0} \neq b\in V(I_M)$.

Conversely, we assume that all principal minors of $M$ are nonzero. Let $a=(a_1,\dots,a_n)\in V(I_M)$ and $J=\{i\in [n]\mid a_i\neq 0\}$.
Since $I_M=(y_iL_i,\; i\in [n])$, we have
\begin{equation}
\sum_{j\in J} M_{ij}a_j=0, \quad \forall i\in J.
\end{equation}
Thus, we have a homogeneous system with matrix of coefficients $M_J$.
By assumption, $M_J$ is invertible. So, the unique solution of the system is the zero vector.
We conclude that $J=\emptyset$, or equivalently, $a=(0,\dots,0)$.
Hence, $V(I_M)=\{\mathbf{0}\}$.
\end{proof}

\begin{proposition}\label{prop:rec_to_quotient}
  The quotient ring $S/I_M$ is finite-dimensional over $\mathbb{K}$ if and only if all principal minors of $M$ are nonzero.

  Equivalently, $\rec{M} $ is finite-dimensional over $\mathbb{K}$ if and only if all principal minors of $M$ are nonzero.   
\end{proposition}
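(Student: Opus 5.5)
The equivalence of the two formulations is immediate from \Cref{prop: iso graded vector spaces}: $\rec{M}\simeq S/I_M$ as graded vector spaces, so one is finite-dimensional exactly when the other is. It therefore suffices to prove that $S/I_M$ is finite-dimensional if and only if $V(I_M)=\{\mathbf{0}\}$, and then invoke \Cref{lem: minors to zero}.

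Suppose first that some principal minor vanishes. Following the proof of \Cref{lem: minors to zero}, there is a nonempty $J\subseteq[n]$ and a nonzero $b\in\mathbb{K}^n$, supported on $J$, with $M_J b_J=0$. This $b$ has coordinates in $\mathbb{K}$ itself, not just in an algebraic closure. Since $b\neq\mathbf{0}$, every scalar multiple $tb$ with $t\in\mathbb{K}$ also lies in $V(I_M)$, and $\mathbb{K}$ is infinite because it has characteristic zero. Consider evaluation at $tb$ composed with the coordinate $y_k$ for some $k$ with $b_k\neq 0$. For each $m$, the power $y_k^m$ takes the value $t^m b_k^m$ at $tb$. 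Now suppose $\sum_{m\le N} c_m y_k^m\in I_M$. Every element of $I_M$ vanishes on $V(I_M)$, so the one-variable polynomial $\sum_m c_m b_k^m t^m$ vanishes for all $t\in\mathbb{K}$, and hence all $c_m=0$. Thus the classes of $1,y_k,y_k^2,\dots$ are linearly independent in $S/I_M$, and $S/I_M$ is infinite-dimensional.

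Conversely, suppose all principal minors are nonzero. I want to show that each $y_k$ is nilpotent modulo $I_M$. Then $S/I_M$ is spanned by the finitely many monomials with bounded exponents, so it is finite-dimensional. The natural tool is Hilbert's Nullstellensatz, and there is a subtlety: it requires working over $\overline{\mathbb{K}}$. So I would observe that the proof of \Cref{lem: minors to zero} is purely linear-algebraic and works verbatim over $\overline{\mathbb{K}}$, since principal minors do not change under field extension. Hence the zero set of $I_M\overline{\mathbb{K}}[\mathbf{y}]$ in $\overline{\mathbb{K}}^n$ is $\{\mathbf{0}\}$. The Nullstellensatz then gives $y_k^{N}\in I_M\overline{\mathbb{K}}[\mathbf{y}]$ for some $N$. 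Since $I_M$ is generated by polynomials over $\mathbb{K}$, and membership of a $\mathbb{K}$-polynomial in the extended ideal reduces to solvability of a linear system with $\mathbb{K}$-coefficients in each degree, we get $y_k^N\in I_M$. This descent step, $I\overline{\mathbb{K}}[\mathbf{y}]\cap\mathbb{K}[\mathbf{y}]=I$, is a standard faithful-flatness fact.

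The main obstacle is this field issue in the converse direction. If the paper prefers to stay elementary and avoid the Nullstellensatz, an alternative is to show directly that $y_1L_1,\dots,y_nL_n$ form a homogeneous system of parameters. Since $I_M$ is generated by $n$ forms of degree $2$ in $n$ variables, this would make them a regular sequence. The Hilbert series would then be $(1+t)^n$, which also prepares the ground for \Cref{main:B}. For the present statement, however, I would rely on the Nullstellensatz route described above.
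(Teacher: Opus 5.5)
Your proof is correct and is essentially the paper's. For the forward direction, a $\mathbb{K}$-rational kernel vector of a singular principal submatrix spans a line in $V(I_M)$, and on that line the powers of a linear form stay independent modulo $I_M$; you take the coordinate $y_k$ with $b_k\neq 0$. For the converse, both arguments use the Nullstellensatz over $\overline{\mathbb{K}}$, and your linear-algebra descent $I_M\overline{\mathbb{K}}[\mathbf{y}]\cap S=I_M$ plays the role of the paper's base-change isomorphism $\overline{\mathbb{K}}[\mathbf{y}]/I_{M,\overline{\mathbb{K}}}\cong (S/I_M)\otimes_{\mathbb{K}}\overline{\mathbb{K}}$.

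The only empty part is your closing ``alternative''. By the paper's definition, showing that $y_1L_1,\dots,y_nL_n$ is a homogeneous system of parameters is exactly showing $\dim_{\mathbb{K}} S/I_M<\infty$. Moreover, the paper derives regularity of this sequence from the present proposition, not the other way round.
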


\begin{proof}
Suppose that \(M\) has some principal minor equal to zero. If $n=1$ then $M$ is the zero ideal and \(S/I_M \simeq S\) is infinite-dimensional, so assume $n > 1$. \Cref{lem: minors to zero} implies that there exists some \(\mathbf{0} \neq a \in V(I_M)\). 
Let \(\ell = \mathbb{K}a\) be the line passing through \(a\) and the origin. 
As \(I_M\) is homogeneous, we have \(\ell \subset V(I_M)\).
Since $n > 1$, we can choose a linear form \(L \in S\) such that the hyperplane \(V(L)\) does not contain the line \(\ell\).

We claim that the set \(\{L^k + I_M\}_{k \in \mathbb{N}}\) is linearly independent in \(S/I_M\).
Suppose that there exist some \(m \in \mathbb{N}\) and scalars \(c_k \in \mathbb{K}\) such that
\begin{equation}
    \sum_{k=1}^m c_k(L^k+I_M) = I_M.
\end{equation}
Let \(f \coloneqq \sum_{k=1}^m c_k L^k \in I_M\). 
Hence, \(f\) vanishes on \(V(I_M)\), and in particular on \(\ell\). 
Since \(L\) is linear, for all \(t \in \mathbb{K}\), we have
\begin{equation}
    0=f(ta)= \sum_{k=1}^m (c_kL(a)^k)t^k.
\end{equation}
Since \(\operatorname{char}(\mathbb{K})=0\) and thus \(|\mathbb{K}|=\infty\), we conclude that \(c_k L(a)^k = 0\) for all \(1 \leq k \leq m\). 
But \(L(a) \neq 0\). 
Therefore, \(c_k = 0\) for all \(1 \leq k \leq m\).
Consequently, \(\{L^k + I_M\}_{k \in \mathbb{N}}\) is linearly independent in \(S/I_M\), and \(\dim_{\mathbb{K}} S/I_M = \infty\). 

Conversely,  suppose that all the principal minors of $M$ are nonzero. 
Let $\overline{\mathbb{K}}$ be an algebraic closure of $\mathbb{K}$ and set $I_{M,\overline{\mathbb{K}}}:= I_M \otimes_\mathbb{K} \overline{\mathbb{K}}[\mathbf{y}]$.
Applying \Cref{lem: minors to zero} over $\overline{\mathbb{K}}$, we get $ V_{\overline{\mathbb{K}}}(I_{M,\overline{\mathbb{K}}})=\{\mathbf{0}\}$.

By Hilbert's Nullstellensatz, $ V_{\overline{\mathbb{K}}}(I_{M,\overline{\mathbb{K}}})=\{\mathbf{0}\}$ implies $ \sqrt{\smash[b]{I_{M,\overline{\mathbb{K}}}}}=(y_1,\dots,y_n)$.
Hence, for each $i\in [n]$ there exists $r_i\geq 1$ such that $ y_i^{r_i}\in I_{M,\overline{\mathbb{K}}}$.
It follows that the quotient $\overline{\mathbb{K}}[\mathbf{y}]/I_{M,\overline{\mathbb{K}}}$ is finite-dimensional over $\overline{\mathbb{K}}$.
On the other hand, there is an isomorphism of $\overline{\mathbb{K}}$-vector spaces
\begin{equation}
\overline{\mathbb{K}}[\mathbf{y}]/I_{M,\overline{\mathbb{K}}}
\cong
\bigl(S/I_M\bigr)\otimes_{\mathbb{K}} 
\overline{\mathbb{K}}.
\end{equation}
Therefore, $S/I_M$ is finite-dimensional over $\mathbb{K}$ as we wanted to show.
The second statement is now a direct consequence of \Cref{prop: iso graded vector spaces}. 
\end{proof}

\begin{example} \label{ex: greater than 2^n}
    Let \begin{equation}
    M=    \left(\begin{array}{rrr}
0 & 1 & 0 \\
0 & 0 & 1 \\
1 & 0 & 0
\end{array}\right).
\end{equation}
The only non-zero principal minor comes from $M$ itself.
The zero locus of the ideal $(y_1y_2, y_2y_3, y_1y_3)$ is the union of the three coordinate axes in $\mathbb{A}_{\mathbb{K}}^3$, that is,
\[
V(y_1y_2, y_2y_3, y_1y_3)
= V(y_1,y_2)\ \cup\ V(y_2,y_3)\ \cup\ V(y_1,y_3).
\]
The ring $\mathbb{K}[y_1,y_2, y_3]/(y_1y_2,y_2y_3,y_1y_3)$ has infinite dimension over $\mathbb{K}$ since the infinite set of nonzero monomials $y_i^n$, for $i=1,2,3$ and $n\geq 1$, is linearly independent.
It follows that $\rec{M}$ is infinite-dimensional. 
\end{example}

\subsection{Dimension count}
We now compute the dimension of each homogeneous component of $\rec{M}$ in the case where $M$ has all its principal minors nonzero. 
By \cref{prop: iso graded vector spaces}, this reduces to computing the dimensions of the graded components of $S/I_M$, that is, its Hilbert series
\begin{equation}
\text{Hilb}(S/I_M ; t) = \sum_{n=0}^\infty \dim_{\mathbb{K}} \left( S/I_M\right)_n t^n \in \mathbb{Q}[[t]].
\end{equation}

When the quotient $S/I_M$ is finite-dimensional, as is the case when $M$ has all its principal minors nonzero by \Cref{prop:rec_to_quotient}, this series is in fact a polynomial.

In this case, to compute the Hilbert series, we observe that $I_M$ is a complete intersection.
In particular, its generators $y_1L_1, \dots, y_nL_n$ form a regular sequence. 
We may therefore use the Koszul complex to obtain a graded free resolution of $S/I_M$, from which the Hilbert series can be read off (see \cite[Chapter I.16]{peeva2010graded}). 
While our approach follows this general strategy, we aim to keep the arguments as elementary as possible.

\begin{remark}
Alternatively, one can compute $\operatorname{Hilb}(S/I_M ; t)$ directly by exhibiting a basis.
It is straightforward to show that the squarefree monomials span $S/I_M$ as a $\mathbb{K}$-vector space.
The key step for linear independence is proving that $y_1 \cdots y_n \notin I_M$, which can be done by a rather technical induction; for this reason, we prefer the approach via regular sequences.
One could also attempt a Gröbner basis computation, but the initial ideal (under standard orders such as grevlex or lex) depends on the matrix $M$, even when all principal minors are nonzero, so this route does not yield a uniform argument.
\end{remark}

We compute the Hilbert series of $S/I_M$ by modding out by one generator at a time, for which we recall the following standard definitions.
A \emph{homogeneous system of parameters} is a sequence of homogeneous elements $f_1,\dots, f_n \in S$ such that $S/(f_1,\dots, f_n)$ is a finite-dimensional vector space.
A \emph{graded regular sequence} is a homogeneous system of parameters with the property that $f_1$ is  not a zero divisor of $S$ and $f_i$ is not a zero divisor in $S/(f_1,\dots,f_{i-1})$ for $i>1$.

\begin{lemma}\label{lem:regular}
   If all the principal minors of $M$ are nonzero then $y_1L_1, \ldots , y_nL_n$ is a graded regular sequence. 
\end{lemma}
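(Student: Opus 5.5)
We need to prove that $y_1L_1,\dots,y_nL_n$ is a graded regular sequence in $S$. By \Cref{prop:rec_to_quotient}, $S/I_M$ is finite-dimensional, so these $n$ homogeneous elements already form a homogeneous system of parameters. It remains to show that each $y_iL_i$ is a nonzerodivisor modulo the previous ones.

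\textbf{Step 1: reduce the regularity condition to dimension counts.} I would use the standard fact, proved in elementary form, that in the graded polynomial ring $S$ every homogeneous system of parameters is a regular sequence, since $S$ is Cohen--Macaulay. To stay self-contained, I would prove the following by induction on $k$.

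\emph{Claim.} For $k=0,\dots,n$, every minimal prime of $J_k=(y_1L_1,\dots,y_kL_k)$ has height $k$, and $y_{k+1}L_{k+1}$ lies in no associated prime of $S/J_k$.

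\textbf{Step 2: factor the generators.} The product structure makes the argument concrete. An element $y_{k+1}L_{k+1}$ is a nonzerodivisor on $S/J_k$ as soon as both $y_{k+1}$ and $L_{k+1}$ are nonzerodivisors on $S/J_k$. Expanding $J_k$, its components are the ideals generated by a choice of one factor from each pair. So I would prove the stronger statement: for every choice $\ell_i\in\{y_i,L_i\}$ with $i\le k$, the linear forms $\ell_1,\dots,\ell_k,y_{k+1},\dots$ behave well.

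The key observation is that the principal-minor hypothesis holds for every principal submatrix of $M$, so it is preserved under restriction. After restricting to the subspace $\{y_i=0 : i\in A\}$, with $A$ the set of indices where $y_i$ was chosen, the remaining forms $L_i$ for $i\notin A$ become the forms of the principal submatrix $M_{[n]\setminus A}$. Hence a sequence of mixed linear forms $\ell_1,\dots,\ell_n$, with $\ell_i\in\{y_i,L_i\}$, is always linearly independent, since its determinant is a principal minor of $M$ up to sign. Linear independence of linear forms means they form a regular sequence.

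\textbf{Step 3: assemble the argument.} Rather than juggling associated primes of the non-radical ideal $J_k$, I would use the cleaner Hilbert-series formulation. A homogeneous system of parameters $f_1,\dots,f_n$ in $S$ is regular if and only if
\[
\dim_{\mathbb K} S/(f_1,\dots,f_n)=\prod_i \deg f_i .
\]
Here that product is $2^n$. Proving this equivalence is itself the standard Cohen--Macaulay argument, so it is circular for an elementary proof.

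I would therefore instead argue by induction on $n$, using the exact sequence
\[
0\to S/(J_k:y_{k+1}L_{k+1})(-2)\to S/J_k\to S/J_{k+1}\to 0 .
\]
I would show that $(J_k:y_{k+1}L_{k+1})=J_k$ by checking it after localizing at each associated prime. The unmixedness of $J_k$ would come from its being a complete intersection of height $k$, and the height count comes from \Cref{lem: minors to zero} applied to principal submatrices: $V(J_k+(y_{k+1}L_{k+1}))$ has codimension $k+1$.

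\textbf{Main obstacle.} The hard step is unmixedness, namely that $S/J_k$ has no embedded primes. That is exactly the Cohen--Macaulay input. I would try first to cite the standard theorem that in a polynomial ring, homogeneous elements generating an ideal of height $k$ form a regular sequence, with the heights supplied by \Cref{lem: minors to zero} on principal submatrices.
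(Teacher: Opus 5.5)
Once you settle on citing the Cohen--Macaulay property of $S$, your argument is exactly the paper's proof: $y_1L_1,\dots,y_nL_n$ is a homogeneous system of parameters by \Cref{prop:rec_to_quotient}, and because $S$ has the regular sequence $y_1,\dots,y_n$, every homogeneous system of parameters in $S$ is regular; the paper simply cites this fact (Stanley, Sturmfels) and stops. The self-contained detours are unnecessary, and Step 2 would not close on its own as written, because your $J_k=(y_1L_1,\dots,y_kL_k)$ need not be radical (for $M=I_n$ it is $(y_1^2,\dots,y_k^2)$), so it is not the intersection of the linear ideals $(\ell_1,\dots,\ell_k)$; independence of the mixed forms only yields a proof once you add a lemma such as ``if $a,x_2,\dots,x_n$ and $b,x_2,\dots,x_n$ are regular sequences then so is $ab,x_2,\dots,x_n$'' together with permutability of homogeneous regular sequences.
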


\begin{proof}
    By \cref{prop:rec_to_quotient}, we have that $y_1L_1, \ldots , y_nL_n$ is an homogeneous system of parameters in $S$.
    Clearly $y_1,\dots y_n$ is a graded regular sequence in $S$.
    If a graded ring has a graded regular sequence, then any homogeneous system of parameters is regular \cite[Theorem 5.9]{stanley1996combinatorics} and \cite[Theorem 2.3.1]{sturmfels1993invariant}, so we can conclude that $y_1L_1, \ldots , y_nL_n$ is regular.
\end{proof}

The following proposition is well-known \cite[Exercise 16.4]{peeva2010graded}.
We include the proof for the reader's convenience.
\begin{proposition}\label{prop:exercise}
    Let $f_1, \dots, f_k$ be a graded regular sequence in $S$ with $\deg f_i = d_i$.
    Then
\begin{equation}
 \operatorname{Hilb}(S/(f_1, \dots, f_k) ; t) = \frac{\prod_{i=1}^k(1-t^{d_i})}{(1-t)^n}.
\end{equation}
\end{proposition}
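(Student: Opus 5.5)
We argue by induction on $k$, tracking how the Hilbert series changes when we quotient by one more element of the sequence. For $k=0$ the statement is the classical formula $\operatorname{Hilb}(S;t)=1/(1-t)^n$. This holds because $\dim_{\mathbb{K}} S_d=\binom{n+d-1}{d}$, which is the coefficient of $t^d$ in $(1-t)^{-n}$. It also follows from $\operatorname{Hilb}(\mathbb{K}[y];t)=1/(1-t)$ together with multiplicativity of Hilbert series under tensor products of graded vector spaces.

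For the inductive step, set $R=S/(f_1,\dots,f_{k-1})$. This is a graded ring, since the $f_i$ are homogeneous. Consider the sequence of graded $\mathbb{K}$-vector spaces
\begin{equation}
0 \longrightarrow R(-d_k) \xrightarrow{\ \cdot f_k\ } R \longrightarrow R/f_kR \longrightarrow 0,
\end{equation}
where $R(-d_k)$ denotes $R$ with degrees shifted up by $d_k$, so that $R(-d_k)_m=R_{m-d_k}$. Multiplication by $f_k$ is then a degree-preserving map $R(-d_k)\to R$. The sequence is exact at the right and in the middle by construction. Exactness at the left, i.e. injectivity, is precisely the hypothesis that $f_k$ is not a zero divisor in $R$; for $k=1$ it is the hypothesis that $f_1$ is not a zero divisor in $S$. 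Also $R/f_kR\cong S/(f_1,\dots,f_k)$.

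Taking dimensions degree by degree, all finite, gives
\begin{equation}
\operatorname{Hilb}(S/(f_1,\dots,f_k);t)=\operatorname{Hilb}(R;t)-t^{d_k}\operatorname{Hilb}(R;t)=(1-t^{d_k})\operatorname{Hilb}(R;t).
\end{equation}
Applying the induction hypothesis to $R$ yields the claimed formula.

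One subtlety is that the definition of graded regular sequence in the paper bundles in the homogeneous system of parameters condition, which is stated for $n$ elements. For $k<n$, the argument above uses only the non-zero-divisor conditions on $f_1,\dots,f_k$, so a truncated sequence $f_1,\dots,f_{k-1}$ still satisfies everything the induction needs. I would phrase the induction over prefixes of the sequence, applying the non-zero-divisor property at each step, rather than invoking the definition for the shorter sequence verbatim. Apart from this and confirming that the shift bookkeeping produces $t^{d_k}$, I expect no real obstacle. Applied later with $k=n$ and all $d_i=2$, the formula gives $(1+t)^n$, which is the dimension count in \cref{main:B}.
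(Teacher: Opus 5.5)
Your proof is correct and is essentially the paper's argument: induction on the length of the sequence, using the exact sequence $0 \to R(-d_k) \xrightarrow{\cdot f_k} R \to R/f_kR \to 0$ and counting dimensions degree by degree. The only differences are cosmetic. You start the induction at $k=0$ from $\operatorname{Hilb}(S;t)=1/(1-t)^n$ rather than at $k=1$, and you state explicitly that only the non-zero-divisor conditions on prefixes of the sequence are used.
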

\begin{proof}
    We argue by induction on the length of the regular sequence.
    For $k=1$, multiplication by $f_1$ gives an exact sequence
    \[
0 \to S \xrightarrow{\cdot f_1} S \to S/(f_1) \to 0.
    \]
    The first map sends degree $m$ to degree $m + d_1$, so in each degree $m$ we get
    \[
    \dim (S/(f_1))_m = \dim S_m - \dim S_{m-d_1},
    \]
    and therefore $\operatorname{Hilb}(S/(f_1);t) = (1-t^{d_1})/(1-t)^n$.
    For the inductive step, since $f_i$ is a nonzero divisor in $S/(f_1,\dots,f_{i-1})$, we have the exact sequence
    \[
0 \to S/(f_1,\dots,f_{i-1}) \xrightarrow{\cdot f_i} S/(f_1,\dots,f_{i-1}) \to S/(f_1,\dots, f_i) \to 0.
    \]
    The same degree-counting argument gives
    \[
    \operatorname{Hilb}(S/(f_1,\dots, f_i);t) = (1-t^{d_i})\operatorname{Hilb}(S/(f_1,\dots, f_{i-1});t),
    \]
    and the result follows by the inductive hypothesis.
\end{proof}

We can now prove one of our main results.

\begin{theorem} \label{thm: main dimension}
If all principal minors of $M$ is nonzero, then $\rec{M}$ is a graded subspace with
\begin{equation}
   \dim_{\mathbb{K}}  \rec{M}_d = \begin{cases}
       \binom{n}{d}, & \mbox{if } 0\leq d \leq n;\\[5pt]
       0, & \mbox{otherwise.}
   \end{cases}
\end{equation}
In particular, $\dim_{\mathbb{K}} \rec{M} = 2^n$.
\end{theorem}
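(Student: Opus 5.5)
The plan is to combine the duality of \cref{prop: iso graded vector spaces} with the Hilbert series computation of \cref{prop:exercise}. First, gradedness of $\rec{M}$ is already \cref{prop:graded}, and by \cref{cor:one_step} we may work with either description. By \cref{prop: iso graded vector spaces}, $\rec{M}\simeq S/I_M$ as graded $\mathbb{K}$-vector spaces, so $\dim_{\mathbb{K}}\rec{M}_d=\dim_{\mathbb{K}}(S/I_M)_d$ for every $d$. It therefore suffices to compute $\operatorname{Hilb}(S/I_M;t)$.

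Since all principal minors of $M$ are nonzero, \cref{lem:regular} shows that $y_1L_1,\dots,y_nL_n$ is a graded regular sequence. Each generator $y_iL_i$ is homogeneous of degree $2$; note $L_i\neq 0$ because $M_{ii}\neq0$, the $1\times 1$ principal minor. Applying \cref{prop:exercise} with $k=n$ and $d_i=2$ gives
\begin{equation}
\operatorname{Hilb}(S/I_M;t)=\frac{(1-t^2)^n}{(1-t)^n}=(1+t)^n=\sum_{d=0}^{n}\binom{n}{d}t^d .
\end{equation}
Reading off coefficients yields $\dim(S/I_M)_d=\binom{n}{d}$ for $0\le d\le n$ and $0$ otherwise. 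Summing these at $t=1$ gives total dimension $2^n$.

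There is no serious obstacle here, since the substantive inputs are already in place. The regularity of the sequence rests on \cref{prop:rec_to_quotient} and the cited Cohen--Macaulay fact. The only point worth checking is that the isomorphism in \cref{prop: iso graded vector spaces} is degree-preserving. This holds because the pairing in \cref{lem: graded linear} matches $S_d$ with $T_d$, and $I_M$ is homogeneous.
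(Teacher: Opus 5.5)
Your proposal is correct and follows essentially the same route as the paper's proof. Both arguments use \cref{lem:regular} to get a graded regular sequence of degree-two generators, apply \cref{prop:exercise} to obtain $\operatorname{Hilb}(S/I_M;t)=(1+t)^n$, and then transfer the graded dimensions to $\rec{M}$ via \cref{prop: iso graded vector spaces}.
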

\begin{proof}
    By \cref{lem:regular}, the elements $y_1L_1, \dots, y_nL_n$ form a graded regular sequence.
    Since each generator has degree two, \cref{prop:exercise} gives
\[
\operatorname{Hilb}(S/I_M ; t) = \frac{(1-t^2)^n}{(1-t)^n} = (1+t)^n = \sum_{d=0}^n \binom{n}{d} t^d.
\]
The result now follows from \cref{prop:rec_to_quotient} and \Cref{prop: iso graded vector spaces}.
\end{proof}

\section{Volumes and the Permutohedron}\label{sec: vols}

In this section, we apply \Cref{thm: main dimension} to the case where $M$ 
is the Cartan matrix associated to an irreducible root system, and show that the polynomials given by the volumes of the faces of the permutahedron associated to $M$ form a basis of $\rec{M}$. 

We follow the notation and conventions of \cite{humphreys1992reflection}.
Let $\Phi$ be an irreducible (reduced, crystallographic) root system of rank $n$ and let $C$ be its corresponding Cartan matrix. 
Let $E$ be the ambient (real) Euclidean space spanned by $\Phi$, with inner product ${\langle-,-\rangle: E \times E \to \mathbb{R} }$.

We fix a set ${\Delta=\{\alpha_i\mid i\in [n]\}}$ of simple roots and let $\Phi^+$ denote the corresponding set of positive roots.
Let ${\alpha^\vee=2\alpha/\langle\alpha,\alpha\rangle}$ be the coroot corresponding to ${\alpha\in\Phi}$. 
The fundamental weights ${\varpi_i}$ are defined by the equations ${\langle\varpi_i,\alpha_j^\vee\rangle=\delta_{ij}}$. 
They form a basis of $E$. 

Let $H_{\alpha}$ be the hyperplane of $E$ orthogonal to a root $\alpha$, and let $s_\alpha$ denote the reflection through $H_{\alpha}$. 
For $\alpha_i\in \Delta$ we write $s_i=s_{\alpha_i}$. 
The group $W$ of orthogonal transformations of $E$ generated by $S=\{s_i\mid i\in [n]\}$ is the finite Weyl group of $\Phi$.

\begin{definition}
For any $\lambda \in E$, the \emph{$W$-permutohedron} $\Pi_W(\lambda)$ is the convex hull of the $W$-orbit of $\lambda$:
\begin{equation}
    \Pi_W(\lambda) \coloneqq \conv \{ w \cdot \lambda \mid w \in W \}.
\end{equation}
It is full-dimensional unless $\lambda=\mathbf{0}$, in which case $\Pi_W(\mathbf{0})=\{\mathbf{0}\}$.
\end{definition}
For general information about $W$-permutohedra see \cite{hohlweg2012permutahedra}.
By \cite[Chapter 16]{postnikov2009permutohedra}, the $W$-permutohedron admits a decomposition as a Minkowski sum of fundamental polytopes:
\begin{equation}\label{eq:minkowski_decomposition}
\Pi_W(x_1\varpi_1+\cdots +x_n\varpi_n ) = x_1 \Pi_W(\varpi_1) + x_2 \Pi_W(\varpi_2) + \dots + x_n \Pi_W(\varpi_n),
\end{equation}
for all $x_i\geq0$.

\begin{remark}
    It is a classic result that there is a unique (up to scalar) translation invariant Lebesgue measure in $\mathbb{R}^n$, which we normally call \emph{volume}.
    In \cref{rem:normalized_volume} we point out how to choose the scalar multiples of the volumes.
    For now, the specific choice is not crucial.
\end{remark}

Recall from the introduction that there exists a homogeneous polynomial $V^\Phi(\mathbf{x}) \in \mathbb{R}[\mathbf{x}]$ of degree $n$, called the \emph{volume polynomial}, with the property that
\begin{equation}\label{eq: vol pol W}
V^\Phi(\lambda_1,\ldots,\lambda_n) \coloneqq \Vol \Pi_W\left(\sum_{i=1}^n \lambda_i\varpi_i\right), \quad \forall \lambda_i\geq0.
\end{equation}

For any subset $J \subseteq [n]$, let $W_J \subseteq W$ be the parabolic subgroup generated by $\{s_j \mid j \in J\}$. 
These subgroups are associated with root subsystems $\Phi_J =\operatorname{Span}_\mathbb{Z}\{\Delta_J\}\cap\Phi$, which may be reducible.
If the Dynkin diagram of $\Phi_J$ has connected components $J_1, \dots, J_k$, then the parabolic subgroup decomposes as a direct product $W_J \cong W_1 \times \dots \times W_k$, where $W_i=W_{J_i}$ is the Weyl group of the irreducible root system $\Phi_i=\Phi_{J_i}$.
Geometrically, this implies that the polytope $\Pi_{W_J}(\lambda) \coloneqq\conv \{ w \cdot \lambda \mid w \in W_J \}$ is a Cartesian product of smaller $W$-permutohedra:
\begin{equation}\label{eq:cartesian_product}
\Pi_{W_J}(\lambda) = \Pi_{W_1}(\lambda) \times \dots \times \Pi_{W_k}(\lambda).
\end{equation}
We then define
\begin{equation}\label{eq:volume_polynomials}
V_J^\Phi(\mathbf{x})\coloneqq V^{\Phi_1}(\mathbf{x})\cdots V^{\Phi_k}(\mathbf{x}),
\end{equation}
where each polynomial on the right hand side is a volume polynomial \eqref{eq: vol pol W}.
Here, we are using the canonical inclusion $\mathbb{R}[x_j\mid j\in J_i]\hookrightarrow \mathbb{R}[\mathbf{x}]$, to see each $ V^{\Phi_i}$ as a polynomial in $\mathbb{R}[\mathbf{x}]$.

Because the volume of a Cartesian product is the product of the volumes of its factors, it is clear that $V_J^\Phi(\mathbf{x})$ coincides with the $|J|$-dimensional volume of $ \Pi_{W_J}\left(\sum_{i=1}^n x_i \varpi_i\right)$, when all  $x_i\geq0$.

We collect these facts in the following definition.

\begin{definition}\label{def: vols}
Given a subset $J\subset [n]$, we define the homogeneous polynomial $V_J^\Phi(\mathbf{x})\in\mathbb{R}[\mathbf{x}]$ of degree $|J|$, as in \eqref{eq:volume_polynomials}.
They are polynomials in the variables $\{x_j \mid j\in J\}$.
We call these polynomials \emph{volume polynomials}.
Notice that $V^\Phi(\mathbf{x}) = V_{[n]}^\Phi(\mathbf{x})$.
We define
\begin{equation}
\mathcal{V}^\Phi\coloneqq \{ V_J^\Phi(\mathbf{x}) \mid J\subset [n]\}.
\end{equation}
\end{definition}

\begin{remark}\label{rem:normalized_volume}
    Consider the weight lattice $\Lambda \coloneqq \mathbb{Z}\varpi_1 \oplus \cdots \oplus \mathbb{Z}\varpi_n \subset E$.
    A linear space $L \subset E$ is said to be $\Lambda$-rational if $L$ can be generated by elements of $\Lambda$.
    An affine linear space is $\Lambda$-rational if it is the translation of a $\Lambda$-rational linear subspace.
    When $L$ is $\Lambda$-rational, the lattice $\Lambda \cap L$ is of full rank within $L$.
    We consider the volume on $L$ to be normalized so that the covolume\footnote{the covolume of a lattice is the volume of any fundamental parallelepiped.} of the lattice $\Lambda \cap L$ is one.
    In the present case, the polytopes $\Pi_{W_J}(\lambda)$, with $\lambda \in E$ are not full dimensional in $V$, but they are full dimensional in the $\Lambda$-rational affine subspace
    \[
    L = \operatorname{Span}_\mathbb{R}\{\alpha_j \mid j\in J\} + \lambda.
    \]
    We use the lattice $\Lambda$ to normalize the volumes of polytopes that are not full dimensional in \eqref{eq:volume_polynomials}.
\end{remark}

The following lemma shows that the volume polynomials are linearly independent (cf. \cite[Lemma 4.7, Corollary 4.8]{castillo2023size}).
We write $[p]_\beta$ for the coefficient of the monomial $\mathbf{x}^\beta$ in $p \in \mathbb{R}[\mathbf{x}]$.
Given $\beta\in\mathbb{N}^n$, we let $\mathsf{supp}(\beta)\coloneqq\{i\in[n]\mid \beta_i\neq0\}$.

\begin{lemma}\label{lem: vols are li}
Let $J\subset [n]$, and let $\beta\in\{0,1\}^n$.
Then
\begin{equation}
\left[ V_J^\Phi(\mathbf{x}) \right]_\beta \mbox{ is }
\begin{cases}
>0, \mbox{ if } \mathsf{supp}(\beta) =J\\
0, \mbox{ otherwise}.
\end{cases}
\end{equation}
In particular, the set $\mathcal{V}^\Phi$ is linearly independent. 
\end{lemma}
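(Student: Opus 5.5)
We prove the coefficient statement first and then derive linear independence from it. By \eqref{eq:volume_polynomials}, $V_J^\Phi(\mathbf{x})=V^{\Phi_1}(\mathbf{x})\cdots V^{\Phi_k}(\mathbf{x})$. The factor $V^{\Phi_i}$ is homogeneous of degree $|J_i|$ in the variables $\{x_j\mid j\in J_i\}$, and the blocks $J_i$ are pairwise disjoint. Hence every monomial of $V_J^\Phi$ has degree $|J|$ and involves only the variables indexed by $J$.

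Now take $\beta\in\{0,1\}^n$ and suppose $\mathsf{supp}(\beta)\neq J$. If $\beta$ has an entry outside $J$, the coefficient $[V_J^\Phi]_\beta$ vanishes because $V_J^\Phi$ does not involve that variable. Otherwise $\mathsf{supp}(\beta)\subsetneq J$, so $|\beta|<|J|$, and the coefficient vanishes by homogeneity.

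When $\mathsf{supp}(\beta)=J$, a squarefree monomial $\prod_{j\in J}x_j$ arises in the product only as the product of the monomials $\prod_{j\in J_i}x_j$ taken from each factor. This gives $[V_J^\Phi]_\beta=\prod_i[V^{\Phi_i}]_{\mathbf 1_{J_i}}$, so it is enough to show that for an irreducible $\Phi$ the coefficient of $x_1\cdots x_n$ in $V^\Phi$ is positive.

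For that we use mixed volumes. By \eqref{eq:minkowski_decomposition} and the standard expansion of the volume of a Minkowski sum,
\[
V^\Phi(\mathbf{x})=\sum_{i_1,\dots,i_n}\mathrm{MV}\bigl(\Pi_W(\varpi_{i_1}),\dots,\Pi_W(\varpi_{i_n})\bigr)x_{i_1}\cdots x_{i_n},
\]
so the coefficient of $x_1\cdots x_n$ equals $n!\,\mathrm{MV}(\Pi_W(\varpi_1),\dots,\Pi_W(\varpi_n))$. Mixed volumes are nonnegative. By the classical criterion, the mixed volume is positive as soon as one can pick segments $[0,v_i]$, with each $[0,v_i]$ contained in a translate of $\Pi_W(\varpi_i)$, such that $v_1,\dots,v_n$ are linearly independent. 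We take $v_i=\varpi_i-s_i\varpi_i=\alpha_i$. This choice works because $\varpi_i$ and $s_i\varpi_i$ are both vertices of $\Pi_W(\varpi_i)$, and the simple roots form a basis of $E$.

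A more elementary alternative is to note that $\Pi_W(\lambda)$ contains the parallelepiped spanned by suitable edges. Another is to use the explicit Postnikov-type formula for the volume. The mixed-volume route is the cleanest, however. The only real obstacle is to justify the positivity criterion for mixed volumes, which we would cite from \cite{ewald1996combinatorial}, together with monotonicity of mixed volume under inclusion. The choice of normalization in \cref{rem:normalized_volume} only rescales the coefficients by a positive constant, so it does not affect signs.

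Linear independence then follows from the triangular structure just established. Suppose $\sum_J c_J V_J^\Phi=0$. Fix $J$ and extract the coefficient of the monomial $\prod_{j\in J}x_j$. Only $V_J^\Phi$ contributes to it, and that contribution is positive, so $c_J=0$. Since $J$ was arbitrary, all coefficients vanish and $\mathcal{V}^\Phi$ is linearly independent.
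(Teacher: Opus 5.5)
Your proof is correct, and it takes a different route from the paper. The paper gives no argument for this lemma in the text; it only refers to \cite[Lemma 4.7, Corollary 4.8]{castillo2023size}. So your write-up is a self-contained replacement, not a reconstruction of an in-text proof.

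Your bookkeeping steps are exactly what is needed:
\begin{itemize}
\item absent variables and homogeneity kill every squarefree coefficient with $\mathsf{supp}(\beta)\neq J$;
\item because the blocks $J_i$ are disjoint, the squarefree coefficient of $V_J^\Phi$ is a product over the irreducible factors;
\item the resulting diagonal pattern gives linear independence.
\end{itemize}
The only geometric input is positivity of $n!\,\mathrm{MV}(\Pi_W(\varpi_1),\dots,\Pi_W(\varpi_n))$. You obtain it cleanly from monotonicity of mixed volumes applied to the segments $[s_i\varpi_i,\varpi_i]=s_i\varpi_i+[0,\alpha_i]$.

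Your approach relies on standard facts about mixed volumes: the multilinear expansion, monotonicity, and the value $|\det|/n!$ on segments. In exchange it is uniform in the type and handles the normalization issue transparently. The citation is shorter but imports both the argument and its conventions from another paper.

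If you want to stay inside the paper's own toolkit, Postnikov's derivative formula from the proof of \cref{prop: GeomSatisfy} also works. Since $V^\Phi_{[n]\setminus\{j\}}$ is free of $x_j$, applying $\partial_{x_2}\cdots\partial_{x_n}$ to $\partial_{x_1}V^\Phi$ kills every term with $j\neq 1$. Hence the coefficient of $x_1\cdots x_n$ equals the positive factor $[W:W_1]\langle\varpi_1,\varpi_1\rangle/\langle\alpha_1,\varpi_1\rangle$ times the squarefree coefficient of $V^\Phi_{[n]\setminus\{1\}}$. Induction on rank then gives positivity, together with an explicit value.

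One caution about your aside. The ``more elementary alternative'' of exhibiting a parallelepiped inside $\Pi_W(\lambda)$ would not suffice on its own. It only yields $V^\Phi(\mathbf{x})\ge c\,x_1\cdots x_n$ on the nonnegative orthant, and such a bound does not force the coefficient of $x_1\cdots x_n$ to be positive; compare $x_1^2+x_2^2\ge 2x_1x_2$. It is the multilinear monotonicity of mixed volumes that makes your main argument go through.
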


We now aim to show that $\mathcal{V}^\Phi$ is a basis of $\rec{C}$. 
Notice that by \Cref{def: vols} and \Cref{lem: vols are li}, we have that $\mathcal{V}^\Phi\subset\mathbb{R}[\mathbf{x}]$ is homogeneous, linearly independent, and its component
\begin{equation*}
(\mathcal{V}^\Phi)_d = \{ V_J^\Phi(\mathbf{x}) \in \mathcal{V}^\Phi \mid |J|=d\}
\end{equation*}
consists of $\binom{n}{d}$ distinct polynomials.
Therefore, by \Cref{thm: main dimension},  in order to prove that $\mathcal{V}^\Phi$ is a basis for $\rec{C}$ it suffices to show the following:
\begin{enumerate}
    \item $\mathcal{V}^\Phi\subset \Delta(C) = \mathcal{D}(C)$, and
    \item All the principal minors of $C$ are nonzero. 
\end{enumerate}

Let us focus in the first condition. 
We first address the top-degree volume polynomial $V^\Phi(\mathbf{x})=V_{[n]}^\Phi(\mathbf{x})$.

\begin{proposition}
\label{prop: GeomSatisfy}
Let $\Phi$ be an irreducible root system of rank $n$ and let $C=(c_{ij})$ be its corresponding Cartan matrix. 
Then, $V^{\Phi}(\mathbf{x}) \in \rec{C}$. 
\end{proposition}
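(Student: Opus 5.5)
The plan is to verify directly that $V^\Phi(\mathbf{x})\in\recD{C}$, which equals $\rec{C}$ by \Cref{cor:one_step}. Concretely, I want to show that $\mathcal{D}_iV^\Phi$ is free of $x_i$ for each $i$, by identifying $\mathcal{D}_iV^\Phi$ geometrically as a constant multiple of $V^\Phi_{[n]\setminus\{i\}}(\mathbf{x})$. By \Cref{def: vols}, the latter involves only the variables $x_j$ with $j\neq i$.

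\textbf{Step 1: reinterpret $\mathcal{D}_i$ geometrically.} With Humphreys' conventions, $\alpha_i=\sum_j c_{ij}\varpi_j$. Hence, writing $\lambda=\sum_j x_j\varpi_j$, we get
\[
\mathcal{D}_iV^\Phi(\mathbf{x})=\frac{d}{dt}\Big|_{t=0}\Vol\Pi_W(\lambda+t\alpha_i).
\]
Both sides are polynomials. It therefore suffices to prove the desired identity for $\lambda$ in the open dominant chamber, i.e.\ all $x_j>0$, which is Zariski dense. For such $\lambda$ and small $|t|$, the vector $\lambda+t\alpha_i$ remains strictly dominant. So by \eqref{eq:minkowski_decomposition} the polytopes $\Pi_W(\lambda+t\alpha_i)$ all have the same normal fan, namely the Coxeter fan.

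\textbf{Step 2: facets and support functions.} For strictly dominant $\lambda$, the facets of $\Pi_W(\lambda)$ have outer normals in the orbits $W\varpi_k$, $k\in[n]$. The facet with normal $\varpi_k$ is $F_k(\lambda)=\Pi_{W_{[n]\setminus\{k\}}}(\lambda)$, and the other facets with normals in $W\varpi_k$ are its images $wF_k$ for $w\in W/W_{[n]\setminus\{k\}}$. Since $\lambda$ is dominant, the support function in direction $u=w\varpi_k$ is
\[
h_\lambda(u)=\max_{v\in W}\langle v\lambda,u\rangle=\langle\lambda,\varpi_k\rangle.
\]
This is the same value for the whole orbit.

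\textbf{Step 3: apply the first variation formula.} For a family of polytopes with a fixed normal fan,
\[
\frac{d}{dt}\Vol P_t=\sum_F\frac{\frac{d}{dt}h_{P_t}(u_F)}{\|u_F\|}\,\Vol_{n-1}(F),
\]
where $u_F$ is the outer normal of the facet $F$ (suitably rescaled to match the lattice normalization of \Cref{rem:normalized_volume}). Here
\[
\frac{d}{dt}h(w\varpi_k)=\langle\alpha_i,\varpi_k\rangle=\delta_{ik}\,\tfrac{\langle\alpha_i,\alpha_i\rangle}{2}.
\]
Hence only the facets in the orbit $W\varpi_i$ contribute, and all of them are congruent to $F_i(\lambda)$. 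This gives
\[
\mathcal{D}_iV^\Phi(\mathbf{x})=\kappa_i\,V^\Phi_{[n]\setminus\{i\}}(\mathbf{x}),
\]
with a positive constant $\kappa_i$ that absorbs $[W:W_{[n]\setminus\{i\}}]$, $\langle\alpha_i,\alpha_i\rangle/2$, $\|\varpi_i\|$ and the normalization constants. The right-hand side is free of $x_i$, so $V^\Phi\in\recD{C}$.

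I expect the main obstacle to be Steps 2–3: justifying the facet description of $\Pi_W(\lambda)$ and the variation formula with consistent normalizations. The normalizations are harmless here, since only freeness of $x_i$ matters and not the value of $\kappa_i$. The facet description is standard: the normal fan of $\Pi_W(\lambda)$ is the Coxeter fan, whose rays are the $W$-orbits of the fundamental weights. The variation formula for a fixed normal fan follows by decomposing $\Pi_W(\lambda)$ into pyramids over its facets from an interior point and differentiating each pyramid's volume.
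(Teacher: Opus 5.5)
Your proposal is correct and is essentially the paper's argument. The paper cites Postnikov's formula (Proposition 18.6 of \cite{postnikov2009permutohedra}) expressing each $\partial V^\Phi/\partial x_k$ as a combination of the facet volumes $V^\Phi_{[n]\setminus\{j\}}$, then contracts with the Cartan entries to get $\mathcal{D}_iV^\Phi=[W:W_{[n]\setminus\{i\}}]\,V^\Phi_{[n]\setminus\{i\}}$. You instead apply the first variation formula, which is what underlies Postnikov's formula, directly in the direction $\alpha_i$, and the same orthogonality $\langle\alpha_i,\varpi_k\rangle=\delta_{ik}\langle\alpha_i,\alpha_i\rangle/2$ removes all other facet orbits.

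The only soft spot is your sketch of the variation formula itself: differentiating the pyramid volumes also produces terms from the changing facet areas. It is cleaner to cite the standard formula, or to use the slab argument, than to derive it that way.
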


\begin{proof}
Let $i\in [n]$.
By \Cref{cor:one_step} it is enough to show that $V^{\Phi}(\mathbf{x}) \in \recD{C}$.
Equivalently, we must show that the directional derivative of $V^{\Phi}(\mathbf{x})$ with respect to $\alpha_i$ is free of the variable $x_i$.
For $j \in [n]$, we define $W_j \coloneqq W_{[n] \setminus \{j \}}$.
By \cite[Proposition 18.6]{postnikov2009permutohedra},
for each $k$ one has
\begin{equation}
\frac{\partial}{\partial x_k} V^\Phi(x_1,\dots,x_n)
=
\sum_{j=1}^n
[W:W_j]\,
\frac{\langle\varpi_k,\varpi_j \rangle}{\langle\alpha_j,\varpi_j\rangle}\,
V_{[n]\setminus \{j\}}^\Phi(\mathbf{x}).  
\end{equation}

Since $\alpha_i=c_{1 i}\,\varpi_1+\cdots+c_{ni}\varpi_n$, we obtain 

\begin{align}
\frac{\partial}{\partial \alpha_{i}} V^{\Phi}(\mathbf{x})  &=  \sum_{k=1}^n  c_{ki}\frac{\partial}{\partial x_k} V^\Phi(\mathbf{x}) \\[5pt]
&= \sum_{k=1}^n c_{ki} \sum_{j=1}^n
[W:W_j]\,
\frac{\langle\varpi_k,\varpi_j \rangle}{\langle\alpha_j,\varpi_j\rangle}\,
V_{[n]\setminus \{j\}}^\Phi(\mathbf{x}) \\[5pt]
&=    \sum_{j=1}^n [W:W_j]\, \sum_{k=1}^n  c_{ki} 
\frac{\langle\varpi_k,\varpi_j \rangle}{\langle\alpha_j,\varpi_j\rangle}\,
V_{[n]\setminus \{j\}}^{\Phi}(\mathbf{x}) \\[5pt]
&=   \sum_{j=1}^n [W:W_j]\,  
\frac{\langle\alpha_i,\varpi_j \rangle}{\langle\alpha_j,\varpi_j\rangle}\,
V_{[n]\setminus \{j\}}^{\Phi}(\mathbf{x}) \\[5pt]
&=     [W:W_i]\,  
V_{[n]\setminus \{i\}}^{\Phi}(\mathbf{x}) \\[5pt]
\end{align}

In particular, we see that $ \frac{\partial}{\partial \alpha_{i}} V^{\Phi}(\mathbf{x}) $ does not depend on the variable $x_i$, as we wanted to show.  
Given that $i$ was arbitrary, we conclude that $V^{\Phi}(\mathbf{x}) \in \recD{C} =\rec{C}$.
\end{proof}

\begin{corollary} \label{cor: GeomSatisfy} 
We have $\mathcal{V}^\Phi\subset\rec{C}$.
\end{corollary}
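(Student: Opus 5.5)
To prove \Cref{cor: GeomSatisfy}, we fix $J\subseteq[n]$ and show $V_J^\Phi(\mathbf{x})\in\recD{C}$; by \Cref{cor:one_step} this is the same as membership in $\rec{C}$. By \eqref{eq:volume_polynomials}, $V_J^\Phi=V^{\Phi_1}\cdots V^{\Phi_k}$, where the $J_r$ are the connected components of the Dynkin diagram restricted to $J$, and each factor $V^{\Phi_r}$ involves only the variables $x_j$ with $j\in J_r$. For each $i\in[n]$ we must check that $\mathcal{D}_i V_J^\Phi=\sum_j c_{ij}\,\partial_{x_j}V_J^\Phi$ is free of $x_i$.

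The first case is $i\notin J$. Then $V_J^\Phi$ does not involve $x_i$ at all. Since $\partial_{x_j}$ commutes with multiplication by $x_i$-free polynomials and preserves $x_i$-freeness, every term $c_{ij}\partial_{x_j}V_J^\Phi$ is free of $x_i$.

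The second case is $i\in J_r$ for some $r$. Only the indices $j\in J$ contribute, and by the Leibniz rule
\[
\mathcal{D}_iV_J^\Phi=\sum_{s=1}^k\Bigl(\sum_{j\in J_s}c_{ij}\,\partial_{x_j}V^{\Phi_s}\Bigr)\prod_{t\neq s}V^{\Phi_t}.
\]
For $s\neq r$ and $j\in J_s$, the nodes $i$ and $j$ lie in different components of the diagram on $J$, so they are non-adjacent and $c_{ij}=0$. Hence only the term $s=r$ survives, giving $\bigl(\sum_{j\in J_r}c_{ij}\partial_{x_j}V^{\Phi_r}\bigr)\prod_{t\neq r}V^{\Phi_t}$. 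The product $\prod_{t\neq r}V^{\Phi_t}$ is free of $x_i$. The first factor is exactly the operator $\mathcal{D}_i$ for the Cartan matrix $C_{J_r}$ of the irreducible system $\Phi_r$, applied to $V^{\Phi_r}$ regarded as a polynomial in the variables $x_j$, $j\in J_r$; here we use that the Cartan matrix of $\Phi_{J_r}$ is the principal submatrix $C_{J_r}$. Applying \Cref{prop: GeomSatisfy} to $\Phi_r$ shows that this factor is free of $x_i$, so $\mathcal{D}_iV_J^\Phi$ is free of $x_i$. The case $J=\emptyset$ is trivial, since the constant $1$ lies in the space.

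The main point needing care is the identification in the second case. We must make sure that $V^{\Phi_r}$ in \eqref{eq:volume_polynomials} is the volume polynomial of $\Phi_r$ written in its own fundamental-weight coordinates, so that \Cref{prop: GeomSatisfy} applies verbatim with Cartan matrix $C_{J_r}$. The normalization of \Cref{rem:normalized_volume} only changes $V^{\Phi_r}$ by a scalar, which is harmless because $\recD{C}$ is a linear space. The remaining ingredient, $c_{ij}=0$ for $i,j$ in different components, follows from the definition of the Dynkin diagram.
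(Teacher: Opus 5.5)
Your proof is correct and follows the same route as the paper: you factor $V_J^\Phi$ over the connected components $J_1,\dots,J_k$ of the Dynkin diagram of $\Phi_J$, then apply \Cref{prop: GeomSatisfy} to each irreducible factor with the principal submatrix $C_{J_r}$. You also spell out what the paper compresses into a single ``therefore'': the Leibniz computation, the vanishing $c_{ij}=0$ for $i,j$ in different components, and the trivial case $i\notin J$.
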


\begin{proof}
Let $J\subset[n]$.
We must show that $V_J^\Phi\in\rec{C}$.
Let $J_1,\ldots, J_k$ be the connected components of the Dynkin diagram of $\Phi_J$, so that
\begin{equation}
V_J^\Phi(\mathbf{x})= V^{\Phi_1}(\mathbf{x})\cdots V^{\Phi_k}(\mathbf{x}),
\end{equation}
where $\Phi_i=\Phi_{J_i}$, as in \eqref{eq:volume_polynomials}.

For $i\in[k]$, let $C_i\coloneqq C_{J_i}$ be the principal submatrix of $C$ obtained by deleting rows and columns not in $J_i$.
Recall that $V^{\Phi_i}(\mathbf{x})$ is a polynomial in the variables $x_j$ with $j\in J_i$.
Therefore,
\begin{equation*}
V_J^{\Phi}(\mathbf{x})\in \rec{C} \iff V^{\Phi_i}(\mathbf{x})\in \rec{C_i}, \ \forall i\in[k].
\end{equation*}

Let $i\in[k]$.
As $\Phi_i$ is an irreducible root system with Cartan matrix $C_i$, we get that $V^{\Phi_i}(\mathbf{x})\in \rec{C_i}$ by \Cref{prop: GeomSatisfy}.
Therefore $V_J^{\Phi}(\mathbf{x})\in \rec{C}$, concluding the proof.
\end{proof}

\begin{remark}
We sketch an alternative proof to \Cref{prop: GeomSatisfy}, showing that the volume polynomial is in $\Delta(C)$ directly.
Let $C^+\coloneqq\operatorname{Span}_{\mathbb{R}_{\geq0}}\{\varpi_1,\ldots,\varpi_n\}$ denote the dominant cone.
Let $i\in[n]$, and let $\lambda$ such that $\lambda-\alpha_i\in C^+$.
The difference set between $\Pi_W (\lambda)$ and $\Pi_W (\lambda-\alpha_i)$ is a disjoint union of $[W:W_i]$ convex polytopes, each with the same volume.
One of these polytopes $\Pi$ has vertices
\begin{equation}
\label{eq:vertices}
\left\{ w\cdot \lambda ~\mid~ w\in W_i\right\} \bigsqcup \left\{ w\cdot (\lambda-\alpha_i) ~\mid~ w \in W_i \right\}.
\end{equation}
Consider the polytope
\begin{equation}\label{eq:missing_piece}
\Gamma \coloneqq \conv \{\mathbf{0} \cup \{ -w\cdot \alpha_i ~\mid~ w \in W_i\} \}.
\end{equation}
Then we have the following Minkowski decomposition
\begin{equation}
\label{eq:killer_decomposition}
\Pi = \Pi_{W_i}(\lambda) + \Gamma,
\end{equation}
which is easily verified using the inequality description of $\Pi_{W_i}(\lambda)$.
Therefore, since the volume of $\Pi_{W_i}(\lambda_1\varpi_1+\cdots+\lambda_n\varpi_n)$ is a polynomial in the coordinates $\lambda_j$ with $j\neq i$, the same is true for $\Pi$ by \eqref{eq:killer_decomposition}.
\end{remark}

The last missing piece to establish \Cref{main:A} is the following.

\begin{lemma}\label{lem: Cartan minors}
Let $C$ be  the Cartan matrix associated to an irreducible root system of rank $n$.
Then every principal minor of $C$ is positive.
\end{lemma}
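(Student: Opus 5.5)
Every principal submatrix $C_J$ of $C$ is itself a Cartan matrix. It is the Cartan matrix of the root subsystem $\Phi_J$, whose entries are $c_{ij}=\langle \alpha_i,\alpha_j^\vee\rangle$ for $i,j\in J$. So the plan is to show directly that the Cartan matrix of any (possibly reducible) root system has positive determinant. The principal minor $\det C_J$ will then be positive for every $J\subseteq[n]$.

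For the determinant, I would factor $C_J$ through a Gram matrix. Write $c_{ij} = 2\langle\alpha_i,\alpha_j\rangle/\langle\alpha_j,\alpha_j\rangle$ with the paper's convention $\alpha^\vee=2\alpha/\langle\alpha,\alpha\rangle$. (Whether the ratio is taken over rows or columns only swaps $C$ and its transpose, which does not change minors.) Then
\[
C_J = G_J\, D_J,\qquad G_J=\bigl(\langle\alpha_i,\alpha_j\rangle\bigr)_{i,j\in J},\quad D_J=\operatorname{diag}\bigl(2/\langle\alpha_j,\alpha_j\rangle\bigr)_{j\in J}.
\]
Hence $\det C_J=\det G_J\cdot\prod_{j\in J}2/\langle\alpha_j,\alpha_j\rangle$. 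The diagonal factor is positive. The Gram matrix $G_J$ is positive definite because the simple roots $\{\alpha_j\}_{j\in J}$ are linearly independent: $\Delta$ is a basis of $E$, so any subset is independent. Therefore $\det G_J>0$.

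The only point needing care is the linear independence of the simple roots, which is part of the definition of a base of $\Phi$ in \cite{humphreys1992reflection}. One also needs the identification of the entries of $C_J$ with the restricted pairings, which is immediate from the definition of $C$. I expect no real obstacle; the main thing to check is that the indexing convention for $C$ matches the factorization, and either convention works by the transpose remark above.
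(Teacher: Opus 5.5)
Your proposal is correct and follows essentially the same route as the paper: write the Cartan matrix as the Gram matrix $\bigl(\langle\alpha_i,\alpha_j\rangle\bigr)$ times a positive diagonal matrix, restrict to the index set $J$, and use that the Gram matrix of the linearly independent simple roots is positive definite. The only cosmetic difference is that the paper factors $C$ once and then takes principal minors of the positive-definite product, whereas you factor each $C_J$ directly; your transpose remark correctly covers the indexing convention.
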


\begin{proof}
Following Bourbaki's conventions \cite{Bour46}, let $S\coloneqq CD$, where $D$ is the diagonal matrix given by $D_{ii}=\dfrac{\langle\alpha_i,\alpha_i\rangle}{2}$.
Then $S_{ij}=\langle\alpha_i,\alpha_j\rangle$, so that $S$ is a symmetric and positive-definite matrix.
Therefore, each of its principal minors is positive.
Let $I\subset [n]$ and consider the principal submatrix $S_I=D_IC_I$.
Since $\operatorname{det}(S_I)$ and $\operatorname{det}(D_I)$ are both positive, we conclude that $\operatorname{det}(C_I)$ must be positive as well.
\end{proof}

We can now finally conclude the proof of our main theorem, which we restate for the reader's convenience.

\begin{theorem}\label{thm: main}
Let $\Phi$ be an irreducible root system and let $C$ be its Cartan matrix. 
Then $\mathcal{V}^\Phi$ is a basis of $\rec{C}$, and its dimension as a real vector space is $2^n$.
In particular, $\rec{C}$ is graded, its graded dimension is $(1+z)^n$ and $(\mathcal{V}^\Phi)_d$ is a basis of $\rec{C}_d$.
\end{theorem}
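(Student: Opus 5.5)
The proof is a matter of combining the results already assembled in this section with the dimension count of \Cref{thm: main dimension}; the plan is the one outlined just after \Cref{lem: vols are li}.

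First, by \Cref{lem: Cartan minors}, every principal minor of $C$ is positive, hence nonzero. So \Cref{thm: main dimension} applies with $M=C$ and $\mathbb{K}=\mathbb{R}$. It gives that $\rec{C}$ is graded, with $\dim_{\mathbb{R}}\rec{C}_d=\binom{n}{d}$ for $0\le d\le n$ and zero otherwise. In particular $\dim_{\mathbb{R}}\rec{C}=2^n$ and the graded dimension is $\sum_d\binom{n}{d}z^d=(1+z)^n$.

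Second, \Cref{cor: GeomSatisfy} gives $\mathcal{V}^\Phi\subset\rec{C}$. Each $V_J^\Phi$ is homogeneous of degree $|J|$ by \Cref{def: vols}, so $(\mathcal{V}^\Phi)_d=\{V_J^\Phi\mid |J|=d\}$ lies in $\rec{C}_d$. By \Cref{lem: vols are li}, the coefficient of $\mathbf{x}^\beta$ in $V_J^\Phi$, for $\beta\in\{0,1\}^n$, is nonzero exactly when $\mathsf{supp}(\beta)=J$. Hence the $V_J^\Phi$ for distinct $J$ are distinct, and the whole family is linearly independent: the matrix of squarefree coefficients is diagonal with nonzero entries. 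Thus $(\mathcal{V}^\Phi)_d$ consists of $\binom{n}{d}$ linearly independent vectors in a space of dimension $\binom{n}{d}$, so it is a basis of $\rec{C}_d$. Since $\rec{C}$ is graded, $\rec{C}=\bigoplus_d\rec{C}_d$, and taking the union over $d$ shows that $\mathcal{V}^\Phi$, which has $2^n$ elements, is a basis of $\rec{C}$.

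There is no real obstacle, since the substantive inputs are already proved: the containment in \Cref{prop: GeomSatisfy}, the regular-sequence argument behind \Cref{thm: main dimension}, and the positivity of the minors. The one point to check is that $\rec{C}$ in \Cref{thm: main dimension} is taken over the same field as $\mathcal{V}^\Phi$. Here that field is $\mathbb{R}$, which has characteristic zero, so the earlier results apply directly.
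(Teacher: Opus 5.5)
Your proposal is correct and takes the same approach as the paper. The paper also combines \Cref{lem: Cartan minors} (so that \Cref{thm: main dimension} applies), the containment \Cref{cor: GeomSatisfy}, and the linear independence from \Cref{lem: vols are li}, and then compares dimensions degree by degree.
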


\begin{proof}
All the claims in the theorem follow by combining \Cref{thm: main dimension}, \Cref{cor: GeomSatisfy} and \Cref{lem: Cartan minors}. 
\end{proof}

\begin{corollary}\label{cor: V is determined by system}
Up to scalar, the polynomial $V^\Phi(\mathbf{x})$ is the unique homogeneous polynomial $p\in\mathbb{R}[\mathbf{x}]$ of degree $n$ with the property that
\begin{equation}\label{eq: vol rec}
p(\mathbf{x}) - p(\mathbf{x}-\alpha_i) \mbox{ does not depend on } x_i \ \forall i\in[n], 
\end{equation}
where the vector coordinates of $\alpha_i$ are taken in the fundamental weight basis.
\end{corollary}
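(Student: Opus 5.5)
The plan is to read \Cref{cor: V is determined by system} directly off \Cref{thm: main}. We need to check two things: that the condition \eqref{eq: vol rec} is exactly membership in $\rec{C}$, and that the degree-$n$ piece of $\rec{C}$ is one-dimensional and spanned by $V^\Phi$.

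First, identify the vectors. In \Cref{def: rec M} the finite difference uses the rows $C_i$ of the matrix. Since $\alpha_i = \sum_k c_{ki}\varpi_k$, the coordinates of $\alpha_i$ in the fundamental weight basis form the $i$-th column of $C$, that is, the $i$-th row of $C^{T}$. The proof of \Cref{prop: GeomSatisfy} already uses this convention, since it differentiates along $\sum_k c_{ki}\partial/\partial x_k$. So the space naturally attached to \eqref{eq: vol rec} is $\rec{C}$, read with $\alpha_i$ as the shift vector, matching the convention the paper uses in \Cref{prop: GeomSatisfy} and \Cref{thm: main}. If one insists on rows literally, I would instead apply the theory to $C^T$. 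Its principal minors are those of $C$, hence positive by \Cref{lem: Cartan minors}, and \Cref{prop: GeomSatisfy} shows $V^\Phi$ satisfies the column-version condition. Either way, the property \eqref{eq: vol rec} says precisely that $p\in\rec{M}$ for the appropriate $M\in\{C,C^T\}$, since $p(\mathbf{x})-p(\mathbf{x}-\alpha_i) = -\Delta_i p$.

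Next, take $p$ homogeneous of degree $n$ satisfying \eqref{eq: vol rec}. Then $p\in\rec{M}_n$. All principal minors of $M$ are nonzero by \Cref{lem: Cartan minors}, so \Cref{thm: main dimension} gives $\dim \rec{M}_n=\binom{n}{n}=1$. By \Cref{prop: GeomSatisfy}, $V^\Phi$ lies in this space, and it is nonzero because \Cref{lem: vols are li} shows its coefficient of $x_1\cdots x_n$ is positive. Hence $p$ is a scalar multiple of $V^\Phi$. Conversely, every scalar multiple of $V^\Phi$ satisfies \eqref{eq: vol rec}, again by \Cref{prop: GeomSatisfy}.

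The only real obstacle is the bookkeeping between rows and columns, i.e. making sure the shift vector in \eqref{eq: vol rec} matches the one for which \Cref{prop: GeomSatisfy} was proved. I would resolve this by citing the computation in that proof, whose derivative is taken along the coordinates of $\alpha_i$ in the $\varpi$-basis. I would also note that transposition preserves principal minors, so the dimension count holds in either convention. Everything else is immediate from \Cref{thm: main}.
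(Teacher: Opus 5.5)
Your proof is correct and is essentially the paper's argument: the condition in the statement is membership in $\rec{C}$, whose degree-$n$ piece is one-dimensional, and $V^\Phi$ is a nonzero element of that piece. The paper simply cites \Cref{thm: main}, which packages \Cref{thm: main dimension}, \Cref{cor: GeomSatisfy}, \Cref{lem: Cartan minors} and \Cref{lem: vols are li}; your row-versus-column check is a sensible precaution the paper skips, and it changes nothing because $C$ and $C^{T}$ have the same principal minors and the computation in \Cref{prop: GeomSatisfy} is taken along the $\varpi$-coordinates of $\alpha_i$.
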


\begin{proof}
By \Cref{thm: main}, we know that $V^\Phi(\mathbf{x})\in \Delta(C)$, so it satisfies \eqref{eq: vol rec}.
On the other hand, suppose such a $p$ exists.
Then $p\in \Delta(C)_n$.
By \Cref{thm: main}, we have that $(\mathcal{V}^\Phi)_n = \{V^\Phi(\mathbf{x})\}$ is a basis of $\Delta(C)_n$.
\end{proof}

We observe that finding the coordinates in the volume basis is straightforward.
Recall that $\beta\mapsto \mathsf{supp}(\beta)$ gives a bijection between elements of $\{0,1\}^n$ and subsets of $[n]$.

\begin{proposition}
Let $p \in \Delta(C)$, with $C$ an irreducible Cartan matrix of finite type of rank $n$.
Then
\begin{equation}
p(\mathbf{x}) = \sum_{\beta \in\{0,1\}^n} \dfrac{[p]_\beta}{[V^\Phi_{\mathsf{supp}(\beta)}(\mathbf{x})]_\beta} V^\Phi_{\mathsf{supp}(\beta)}(\mathbf{x}) .
\end{equation}
\end{proposition}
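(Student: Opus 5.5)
By \Cref{thm: main}, $\mathcal{V}^\Phi$ is a basis of $\Delta(C)$. So for $p \in \Delta(C)$ we can write, uniquely,
\[
p(\mathbf{x}) = \sum_{J\subset[n]} c_J\, V^\Phi_J(\mathbf{x}), \qquad c_J \in \mathbb{R}.
\]
The task is to identify each coefficient $c_J$. We do this by extracting the coefficient of a square-free monomial on both sides.

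Fix $\beta\in\{0,1\}^n$ and take the coefficient of $\mathbf{x}^\beta$. This operation is linear, so
\[
[p]_\beta = \sum_{J\subset[n]} c_J\,[V^\Phi_J]_\beta .
\]
By \Cref{lem: vols are li}, the coefficient $[V^\Phi_J]_\beta$ vanishes unless $\mathsf{supp}(\beta)=J$. Hence the sum collapses to the single term $J=\mathsf{supp}(\beta)$, giving $[p]_\beta = c_{\mathsf{supp}(\beta)}\,[V^\Phi_{\mathsf{supp}(\beta)}]_\beta$.

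For $J=\mathsf{supp}(\beta)$, the same lemma says $[V^\Phi_J]_\beta>0$, so in particular it is nonzero. We may therefore divide and obtain
\[
c_{\mathsf{supp}(\beta)} = \frac{[p]_\beta}{[V^\Phi_{\mathsf{supp}(\beta)}]_\beta}.
\]

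The map $\beta\mapsto\mathsf{supp}(\beta)$ is a bijection between $\{0,1\}^n$ and the subsets of $[n]$. Reindexing the expansion of $p$ by $\beta$ instead of $J$ therefore gives exactly the claimed formula. Nothing here is a real obstacle; the only points to check are that the coefficient from \Cref{lem: vols are li} is nonzero and that the reindexing is a bijection.
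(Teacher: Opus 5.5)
Your proof is correct and follows essentially the same route as the paper: expand $p$ in the basis $\mathcal{V}^\Phi$ from \Cref{thm: main}, then read off each coefficient from the square-free monomial $\mathbf{x}^\beta$ using \Cref{lem: vols are li}. You also spell out the nonvanishing of $[V^\Phi_{\mathsf{supp}(\beta)}]_\beta$ and the reindexing bijection, which the paper leaves implicit.
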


\begin{proof}
By \Cref{thm: main}, we have that there exist some $\mu_\beta\in\mathbb{R}$ such that
\begin{equation*}
p(\mathbf{x})= \sum_{\beta \in\{0,1\}^n} \mu_\beta V^\Phi_{\mathsf{supp}(\beta)}(\mathbf{x}).
\end{equation*}
Let $\beta \in\{0,1\}^n$.
Then \Cref{lem: vols are li} implies that $[p]_\beta=\mu_\beta \,[V^\Phi_{\mathsf{supp}(\beta)}]_\beta$ with $[V^\Phi_{\mathsf{supp}(\beta)}]_\beta>0$.
\end{proof}

We close by remarking that the squarefree coefficient of the volume polynomial of the whole permutohedron in type $A_n$ is known to be $n!$ \cite[Theorem 16.3 (7)]{postnikov2009permutohedra}.

\bibliographystyle{amsalpha} 
\bibliography{ref}

\providecommand{\bysame}{\leavevmode\hbox to3em{\hrulefill}\thinspace}
\providecommand{\MR}{\relax\ifhmode\unskip\space\fi MR }
\providecommand{\MRhref}[2]{%
  \href{http://www.ams.org/mathscinet-getitem?mr=#1}{#2}
}
\providecommand{\href}[2]{#2}
\begin{thebibliography}{CdlFLP25}

\bibitem[BH20]{branden2020lorentzian}
Petter Br{\"a}nd{\'e}n and June Huh, \emph{Lorentzian polynomials}, Annals of Mathematics \textbf{192} (2020), no.~3, 821--891.

\bibitem[Bou02]{Bour46}
Nicolas Bourbaki, \emph{Lie groups and lie algebras}, Springer Berlin Heidelberg, 2002.

\bibitem[CdlFLP23]{castillo2023size}
Federico Castillo, Damian de~la Fuente, Nicolas Libedinsky, and David Plaza, \emph{On the size of bruhat intervals}, arXiv preprint arXiv:2309.08539 (2023).

\bibitem[CdlFLP25]{castillo2025paper}
\bysame, \emph{Paper boat}, arXiv preprint arXiv:2504.04489 (2025).

\bibitem[Ewa96]{ewald1996combinatorial}
G{\"u}nter Ewald, \emph{Combinatorial convexity and algebraic geometry}, vol. 168, Springer Science \& Business Media, 1996.

\bibitem[Hoh12]{hohlweg2012permutahedra}
Christophe Hohlweg, \emph{Permutahedra and associahedra: generalized associahedra from the geometry of finite reflection groups}, Associahedra, {T}amari lattices and related structures, Progr. Math., vol. 299, Birkh\"auser/Springer, Basel, 2012, pp.~129--159.

\bibitem[Hum92]{humphreys1992reflection}
James~E Humphreys, \emph{Reflection groups and coxeter groups}, no.~29, Cambridge university press, 1992.

\bibitem[Men26]{menges2026comparing}
Amelie Menges, \emph{Comparing the sets of volume polynomials and lorentzian polynomials}, Combinatorics, Probability and Computing \textbf{35} (2026), no.~1, 26--39.

\bibitem[Pee10]{peeva2010graded}
Irena Peeva, \emph{Graded syzygies}, vol.~14, Springer Science \& Business Media, 2010.

\bibitem[Pos09]{postnikov2009permutohedra}
Alexander Postnikov, \emph{Permutohedra, associahedra, and beyond}, International Mathematics Research Notices \textbf{2009} (2009), no.~6, 1026--1106.

\bibitem[Sta96]{stanley1996combinatorics}
Richard~P Stanley, \emph{Combinatorics and commutative algebra}, Springer, 1996.

\bibitem[Stu93]{sturmfels1993invariant}
Bernd Sturmfels, \emph{Invariant theory of finite groups}, Algorithms in Invariant Theory, Springer, 1993, pp.~25--75.

\end{thebibliography}

\end{document}